\newcommand{\R}{\mathbb{R}}
\newcommand{\Fc}{\mathcal{F}}
\newcommand{\I}{\mathbbm{1}}
\newcommand{\Nc}{\mathcal{N}}
\newcommand{\Var}{\mathrm{Var}}
\newcommand{\PF}{\mathrm{PF}}
\newcommand{\PPF}{\mathrm{PPF}}
\newcommand{\E}{\mathbb{E}}
\newcommand{\PR}{\mathrm{P}}
\newcommand{\VarR}{\mathrm{Var}}
\newcommand{\Ber}{\mathtt{Bernoulli}}
\newcommand{\Rayleigh}{\mathtt{Rayleigh}}
\def\bal#1\eal{\begin{align*}#1\end{align*}}
\def\bal#1\eal{\begin{align*}#1\end{align*}}
\newtheorem{theorem}{Theorem}[section]
\newtheorem{lemma}[theorem]{Lemma}
\newtheorem{proposition}[theorem]{Proposition}
\newtheorem{remark}[theorem]{Remark}
\newcommand{\Address}{
\bigskip
\footnotesize

\textsc{Department of Mathematics \& Statistics, McMaster University, Hamilton, ON, Canada} \par\nopagebreak
\textit{E-mail address}: paguyoj@mcmaster.ca \\

\textsc{Department of Mathematics, University of Denver, Denver, CO, USA} \par\nopagebreak
\textit{E-mail address}: mei.yin@du.edu}
\begin{document}
 
\title{Limit distributions for cycles of random parking functions} 
\author{J. E. Paguyo and Mei Yin\thanks{M.~Yin was supported in part by the Simons Foundation Grant MPS-TSM-00007227.}} 
\date{}
\maketitle

\abstract{
We study the asymptotic behavior of cycles of uniformly random parking functions. Our results are multifold: we obtain an explicit formula for the number of parking functions with a prescribed number of cyclic points and show that the scaled number of cyclic points of a random parking function is asymptotically Rayleigh distributed; we establish the classical trio of limit theorems (law of large numbers, central limit theorem, large deviation principle) for the number of cycles in a random parking function; we also compute the asymptotic mean of the length of the $r$th longest cycle in a random parking function for all valid $r$. A variety of tools from probability theory and combinatorics are used in our investigation. Corresponding results for the class of prime parking functions are obtained.
}

\section{Introduction}

Parking functions were introduced by Konheim and Weiss \cite{KW66} in their study of the hash storage structure and have since found wide applications to combinatorics, probability, and computer science. We refer the reader to Yan \cite{Yan15} for an accessible and extensive survey of combinatorial results. 

Consider $n$ parking spots placed sequentially on a one-way street. A line of $n$ cars, labeled $1 \leq i \leq n$, enters the street one at a time. The $i$th car drives to its preferred parking spot $\pi_n(i)$ and parks if the spot is available. Otherwise, if the spot is occupied, it parks in the first available spot after $\pi_n(i)$. If a car is unable to find any available spots, then it exits the street without parking. A sequence of parking preferences $\pi_n = (\pi_n(1), \ldots, \pi_n(n))$ is a {\em (classical) parking function} of length $n$ if all $n$ cars are able to park. 

Let $[n] := \{1,\ldots, n\}$. By the pigeonhole principle, a sequence $\pi_n = (\pi_n(1), \ldots, \pi_n(n)) \in [n]^n$ is a parking function of length $n$ if and only if $\pi_{n,(i)} \leq i$ for all $i \in [n]$, where $\pi_{n,(1)} \leq \dotsb \leq \pi_{n,(n)}$ is the weakly increasing rearrangement of $\pi_n$. Equivalently, $\pi_n$ is a parking function if and only if $|\{k : \pi_n(k) \leq i\}| \geq i$ for all $i \in [n]$. This implies that parking functions are invariant under permutations of the coordinates. 

Let $\PF_n$ be the set of parking functions of length $n$. The number of parking functions is
    $|\PF_n| = (n+1)^{n-1}$.
An elegant, unpublished proof of this result using a circular symmetry argument was given by Pollak and recounted in \cite{FR74} and \cite{Pollak2}. A parking function $\pi_n = (\pi_n(1), \ldots, \pi_n(n))$ is a {\em prime parking function} if for all $1 \leq j \leq n-1$, at least $j+1$ cars have a parking preference in the first $j$ parking spots. Equivalently, $\pi_n$ is a prime parking function if it remains a parking function even after removing a coordinate that equals $1$. Let $\PPF_n$ be the set of prime parking functions of length $n$. Kalikow \cite{Kal99} modified Pollak's circular argument to show that the number of prime parking functions is
    $|\PPF_n| = (n-1)^{n-1}$.

The probabilistic study of parking functions is a more recent line of work \cite{Bel23, CM01, DH17, DHHRY23, FPV98, Har25, Jan01, KY21, Pag23, SY23, Yin23I, Yin23II}. 
Of particular importance to us is the probabilistic program, initiated by Diaconis and Hicks in \cite{DH17}, of studying the distribution of statistics of uniformly random parking functions.

\subsection{Literature review}

A {\em mapping} of length $n$ is a function $f_n: [n] \to [n]$. Let $\Fc_n$ be the set of mappings of length $n$, so that $|\Fc_n| = n^n$. Observe that a parking function is also a mapping, so that $\PF_n \subseteq \Fc_n$. At the other end of the spectrum, let $S_n$ be the set of {\em permutations} of $[n]$. It is clear that any permutation is also a parking function, so that $S_n \subseteq \PF_n$.

For any mapping $f_n = (f_n(1), \ldots, f_n(n)) \in \Fc_n$ of length $n$, we can define its {\em digraph} or {\em digraph representation}, $G_{f_n}$, as the directed graph with vertex set $[n]$ and a directed edge from $i$ to $f_n(i)$ for all $i \in [n]$. Observe that every vertex has outdegree one and that fixed points are represented by a vertex with a self-loop. Therefore digraphs of mappings, and hence parking functions, and hence permutations, consist of a collection of connected {\em components}, with each component consisting of a {\em cycle} along with {\em tree components} attached to vertices of the cycle. Note that all tree components are trivial in the digraphs of permutations, containing a root only with no hanging branches. See Figure \ref{fig:figure1} for the digraph representation of the parking function $\pi_{20}=(18, 4, 2, 19, 12, 2, 2, 6, 10, 13, 5, 5, 18, 15, 9, 1, 17, 3, 10, 4) \in \PF_{20}$.

\begin{figure}[h!]
\centering
\includegraphics[scale = 0.4]{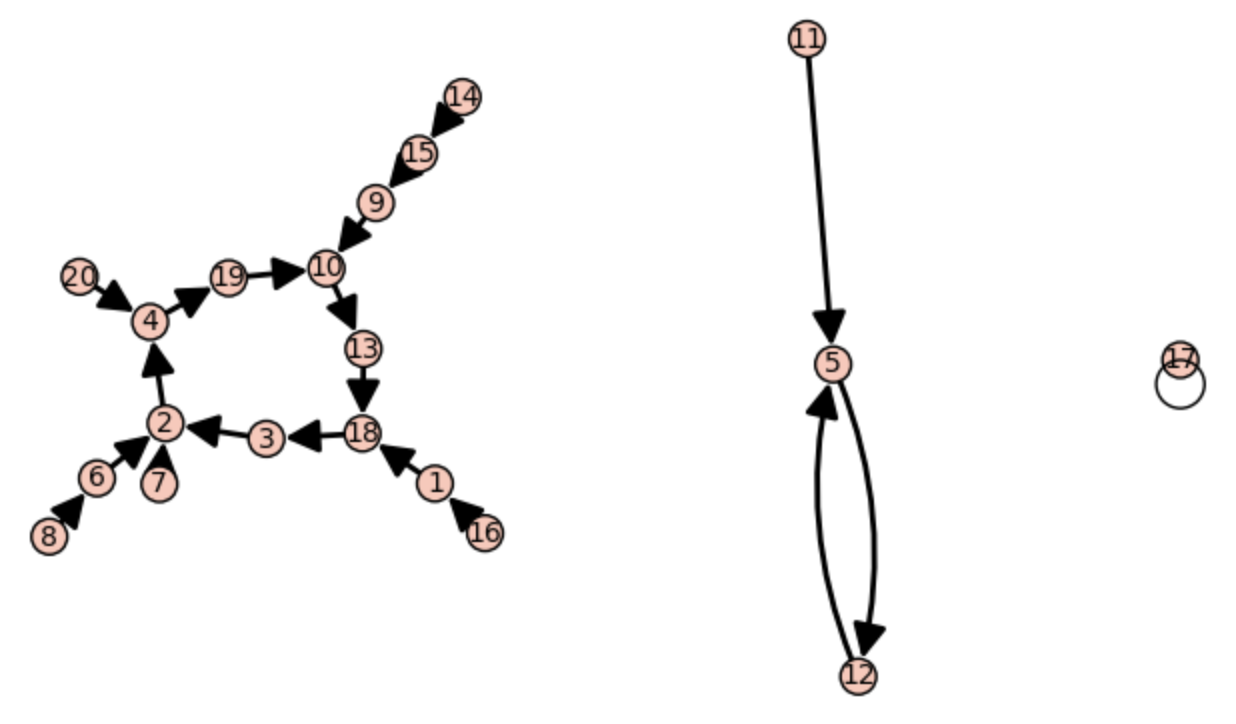}
\caption{The digraph representation of the parking function \\ $\pi_{20}=(18, 4, 2, 19, 12, 2, 2, 6, 10, 13, 5, 5, 18, 15, 9, 1, 17, 3, 10, 4)$ of length 20.}
\label{fig:figure1}
\end{figure}

Various statistics on mappings can be defined in terms of statistics on its corresponding digraph. Let $f_n \in \Fc_n$ be a random mapping, so that its corresponding digraph $G_{f_n}$ is a random graph. Let $K_n(f_n)$ be the {\em number of cycles} of $f_n$, defined as the number of cycles in the digraph $G_{f_n}$. Since each component in $G_{f_n}$ contains exactly one cycle (fixed points are special cases of cycles), $K_n(f_n)$ also equals the {\em number of components} of $f_n$. For all $k \in [n]$, let $C_k(f_n)$ be the {\em number of $k$-cycles} of $f_n$, defined as the number of cycles of length $k$ in $G_{f_n}$. A {\em cyclic point} of $f_n$ is a vertex which lies on a cycle in $G_{f_n}$. Let $\lambda_n(f_n)$ be the {\em number of cyclic points} of $f_n$. Finally, let $L_r(f_n)$ be the {\em length of the $r$th longest cycle} of $f_n$, defined as the length of the $r$th longest cycle in $G_{f_n}$.

The study of the cycle structure of random permutations has a rich history. We only survey some of these results here and refer the reader to Arratia, Barbour, and Tavar\'{e} \cite{ABT} for a treatise on random permutations and logarithmic combinatorial structures. The central limit theorem for the number of cycles in a random permutation $\sigma_n \in S_n$ was first established by Goncharov \cite{Gon44}, who showed that $K_n(\sigma_n)$ is asymptotically normal with mean and variance $\log n$. DeLaurentis and Pittel \cite{DP85} refined this by establishing a functional central limit theorem. Goncharov \cite{Gon44} also studied the joint distribution of small cycle counts $(C_1(\sigma_n), C_2(\sigma_n), \ldots)$ and showed that it converges in distribution to a process of independent Poisson random variables $(Z_1, Z_2, \ldots)$, where $Z_k$ is Poisson distributed with rate $1/k$ for all $k$. Arratia and Tavar\'{e} \cite{AT92} extended this result by obtaining a convergence rate for the distributional approximation. Shepp and Lloyd \cite{SL66} studied the length of the $r$th longest cycle, $L_r(\sigma_n)$, and obtained an expression for the asymptotic mean. Subsequently, Kingman \cite{Kin77} proved that the process of normalized longest cycle lengths $\left(\frac{L_1(\sigma_n)}{n}, \frac{L_2(\sigma_n)}{n}, \ldots \right)$ converges to the Poisson-Dirichlet distribution with parameter $1$. The large deviation principle for the number of cycles was established by Feng and Hoppe \cite{FH98}, where a more general large deviation principle was obtained for the Ewens-Pitman sampling model. 

The study of the cycle and component structure of random mappings was initiated by Harris \cite{Har60} who obtained formulas for distributions of several statistics. The central limit theorem for the number of cycles (equivalently, components) in a random mapping $f_n \in \Fc_n$ was established by Stepanov \cite{Ste69}. He showed that the number of cyclic points $\lambda_n(f_n)$, scaled by $\sqrt{n}$, is asymptotically $\Rayleigh(1)$ distributed, and used this to show that $K_n(f_n)$ is asymptotically normal with mean and variance $\frac{1}{2} \log n$. Thus mappings have on average half as many cycles as permutations. Hansen \cite{Han89} refined this result by establishing the functional central limit theorem for $K_n(f_n)$. A large deviation principle for the number of components was obtained by Hwang \cite{Hwa96}, where a more general result for combinatorial structures was established. Purdom and Williams \cite{PW68} studied the length of the $r$th longest cycle, $L_r(f_n)$, and computed the asymptotic mean. The analogous result for the size of the $r$th largest component was obtained by Kolchin \cite{Kol76}. The result that the process of normalized longest cycle lengths $\left(\frac{L_1(f_n)}{\lambda_n(f_n)}, \frac{L_2(f_n)}{\lambda_n(f_n)}, \ldots \right)$ converges in distribution to the Poisson-Dirichlet distribution with parameter $1$ seems to be folklore according to \cite{ABT}. On the other hand, Aldous \cite{Ald85} showed that the process of normalized largest component sizes $\left(\frac{\tilde{L}_1(f_n)}{n}, \frac{\tilde{L}_2(f_n)}{n}, \ldots \right)$ converges to the Poisson-Dirichlet distribution with parameter $\frac{1}{2}$, where $\tilde{L}_r(f_n)$ is the size of the $r$th largest component. The small cycle counts $C_k(f_n)$ were investigated by Flajolet and Odlyzko \cite{FO90} where they established Poisson limit theorems. In \cite{AP94}, Aldous and Pitman showed that the asymptotic distribution of various functionals of uniformly random mappings are functionals of Brownian bridge (see also \cite{Pit01} and Chapter 9 of \cite{Pit06}). For example, the asymptotic distribution of the diameter of the digraph of a random mapping is a functional of a reflecting Brownian bridge \cite{AP02}. More recently, Mutafchiev and Finch \cite{MF24} studied the {\em deepest cycle} of a random mapping, which is the cycle contained in the largest component, and showed that the length of the deepest cycle scaled by $\sqrt{n}$ converges in distribution to $\sqrt{\chi^2(1)\mu}$, where $\chi^2(1)$ is the standard chi-squared random variable and $\mu$ is the limiting distribution of the size of the largest component scaled by $n$. 

A unifying theme in the probabilistic exploration into parking functions is the notion of the {\em equivalence of ensembles}.
Diaconis and Hicks \cite{DH17} stated that for certain statistics, it is natural to expect the distribution of statistics in the ``micro-canonical ensemble", $\PF_n$, to be close to the distribution of statistics in the ``canonical ensemble", $\Fc_n$. Indeed, they showed that the equivalence of ensembles holds for statistics such as the number of repeats, lucky cars, and descents. However, they also showed that it fails for some statistics such as the distribution of the value of the first coordinate.

As an open problem, Diaconis and Hicks suggested the study of the cycle structure of a random parking function. Not much progress has been made as of yet.
Paguyo \cite{Pag23} used Stein's method to show for a uniformly random parking function, $\pi_n \in \PF_n$, the process of small cycle counts $(C_1(\pi_n), C_2(\pi_n), \ldots)$ converges in distribution to a process of independent Poisson random variables $(Z_1, Z_2, \ldots)$, where $Z_k$ is Poisson distributed with rate $1/k$ for all $k$. This shows that the asymptotic behavior of the small cycle counts of random parking functions coincides with that of random permutations and random mappings. 
Rubey and Yin \cite{RY25} obtained exact formulas for the number of parking functions with exactly $k$ many $m$-cycles and for the expected number of $m$-cycles. 

\subsection{Our contributions}
\begin{figure}[h!]
\centering
    \includegraphics[width=.5\textwidth]{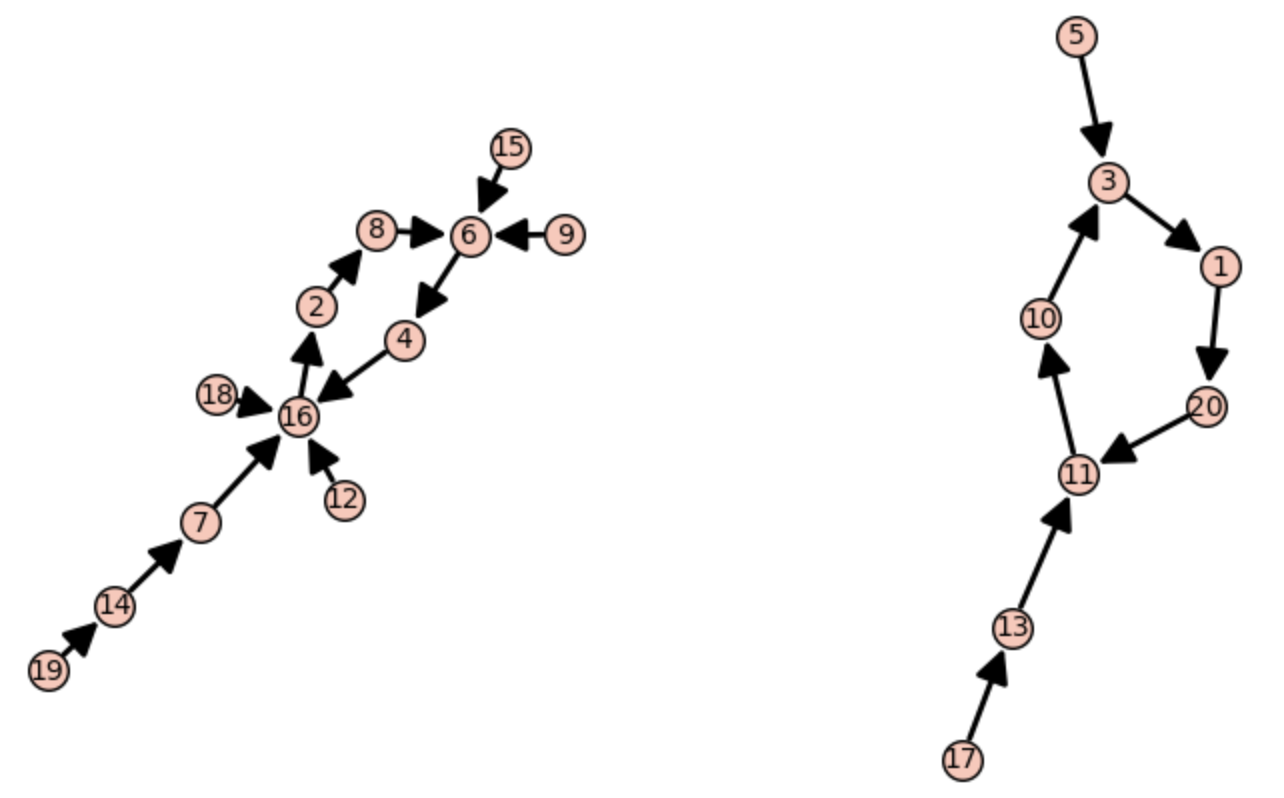}
\hfill
    \includegraphics[width=.5\textwidth]{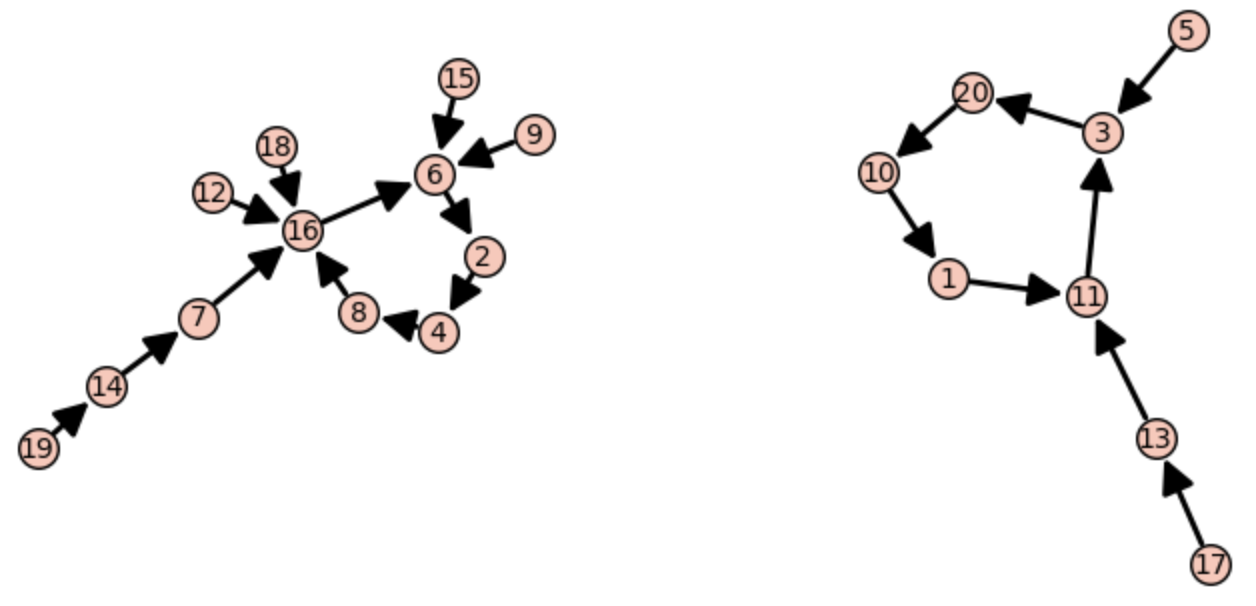}
\hfill
    \includegraphics[width=.5\textwidth]{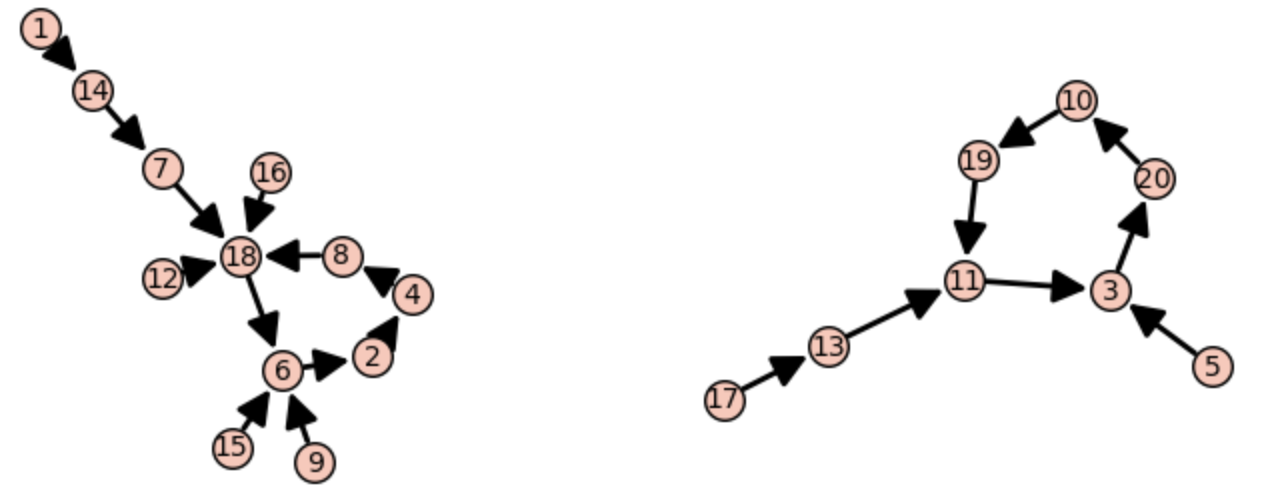}
    \caption{The digraph representations of parking functions $\pi^1_{20}$ (top plot) and $\pi^2_{20}$ (middle plot) and non-parking function $\pi^3_{20}$ (bottom plot). Both $\pi^2_{20}$ and $\pi^3_{20}$ are transformed from $\pi^1_{20}$ by permuting vertices.}
    \label{fig:figure2}
\end{figure}

Understanding the cycle structure of random parking functions has been a challenging research topic, as random parking functions do not satisfy nice \emph{exchangeability} properties like random permutations and random mappings do. Permuting the vertices in the digraph representation of a permutation (resp. mapping) in any fashion always yields a permutation (resp. mapping), but permuting the vertices in the digraph representation of a parking function does not necessarily yield another parking function. Still, there is a glimpse of hope. In each tree of the digraph representation of a parking function, there is a singled out root in a natural way, namely, the cyclic vertex to which the tree is attached. Permuting the rooted trees (each treated as an integral part) along the cycle where the root belongs in the digraph representation of a parking function in any fashion always yields a parking function. The investigations in this paper will rely heavily on the fact that the cyclic vertices of $\pi_n$ form a uniform permutation of size $\lambda_n(\pi_n)$, cf. Remark \ref{rk:exp_cyclic}.
See the top plot in Figure \ref{fig:figure2} for the digraph representation of a parking function $\pi^1_{20}=(20, 8, 1, 16, 3, 4, 16, 6, 6, 3, 10, 16, 11, 7, 6, 2, 13, 16, 14, 11) \in \PF_{20}$. Permuting the tree components attached to the cyclic points maintains the parking condition, yielding $\pi^2_{20}=(11, 4, 20, 8, 3, 2, 16, 16, 6, 1, 3, 16, 11, 7, 6, 6, 13, 16, 14, 10)$, which is a valid parking function. Permuting other vertices nevertheless might violate the parking condition, yielding $\pi^3_{20}=(14, 4, 20, 8, 3, 2, 18, 18, 6, 19, 3, 18, 11, 7, 6, 18, 13, 6, 11, 10)$, which is not a valid parking function. See the middle and bottom plots in Figure \ref{fig:figure2}.

In this paper, we demonstrate the asymptotic equivalence of ensembles between parking functions and mappings concerning the cyclic points via an in-depth exploration of the cycle structure of random parking functions. A variety of tools from probability theory and combinatorics are used. Let $\pi_n \in \PF_n$ be a uniformly random parking function. Our contributions are:

\begin{itemize}
\item Theorems \ref{classical-cyclic} and \ref{prime-cyclic}: Formulas for the number of classical and prime parking functions with exactly $k$ cyclic points. 
\item Proposition \ref{PFcyclicPDF}: The scaled number of cyclic points, $\frac{\lambda_n(\pi_n)}{\sqrt{n}}$, is asymptotically $\Rayleigh(1)$ distributed. 
\item Theorem \ref{PFcyclesLLN} (Law of large numbers): The scaled number of cycles, $\frac{K_n(\pi_n)}{\frac{1}{2} \log n}$, converges in probability to $1$. 
\item Theorem \ref{PFcyclesCLT} (Central limit theorem): The number of cycles, $K_n(\pi_n)$, is asymptotically normal with mean and variance $\frac{1}{2}\log n$. 
\item Theorem \ref{PFcyclesLDP} (Large deviation principle): The family $\left\{\frac{K_n(\pi_n)}{\log n} \right\}_{n \geq 1}$ satisfies a large deviation principle with speed $\log n$ and an explicit rate function $I(x)$. 
\item Theorem \ref{LongCyclesAsymptoticMean}: The asymptotic mean of the scaled length of the $r$th longest cycle, $\frac{L_r(\pi_n)}{\sqrt{n}}$, for all valid $r$, for classical parking functions.
\item Theorem \ref{PPFLimitTheorems}: The corresponding limit theorems for the class of prime parking functions. 
\end{itemize}

Our results show that uniformly random mappings and uniformly random parking functions behave similarly not just asymptotically, but also combinatorially. Earlier in the introduction, we pointed out that Pollak's circle argument on mappings from $[n]$ to $[n+1]$ reveals hidden connections between parking functions and mappings. We will use an adaptation of Pollak's argument in our proof of Theorem \ref{classical-cyclic}, which shows that the number of parking functions of length $n$ with $k$ cyclic points is $k k! (n+1)^{n-k-2} \binom{n+1}{k}$ for $1\leq k<n$ and $n!$ for $k=n$. In comparison, Aldous and Pitman \cite{AP94} studied the number of cycles of random mappings, and proved that the number of mappings from $[n]$ to $[n]$ that
have $k$ cyclic points is $k k! n^{n-k-1} \binom{n}{k}$. The two numbers display striking similarity. Indeed, some other statistics are also preserved by the combinatorial connections between mappings and parking functions. 

The {\em terminal closers} of a mapping $f_n$ (resp. of a parking function $\pi_n$) are the elements of the maximal initial segment of distinct terms in $f_n$ (resp. $\pi_n$).
That is, $f_n$ has $k$ terminal closers if $f_n(1), \ldots, f_n(k)$ are all distinct, but $f_n(k+1) = f_n(i)$ for some $i \in [k]$. For mappings, a straightforward computation shows that the number of mappings with exactly $k$ cyclic points coincides with the number of mappings with exactly $k$ terminal closers. By Pollak's circle argument, the same conclusion also holds for parking functions. Thus the number of terminal closers and the number of cyclic points of a uniformly random mapping (resp. random parking function) have the same probability mass function, and are thus equal in distribution. 

Since parking functions describe a dynamic parking process, this equality in distribution between terminal closers and cyclic points is even more intriguing than in the context of mappings. Having $k$ terminal closers in a parking function ensures that the first $k$ cars are lucky (a car is {\em lucky} if it parks at its preferred spot) and that the $(k+1)$st car, having the same preference as one of the first $k$ cars, is unlucky and causes a collision. Thus the expected number of lucky cars in a parking function is lower bounded by the expected number of cyclic points, and moreover, our asymptotic results on cyclic points show that, on average, the first order of $\sqrt{n}$ cars are lucky. 

On the other hand, cyclic points are not necessarily lucky and non-cyclic points are not necessarily unlucky. For example if $\pi_6 = (2, 6, 1, 1, 1, 4)$, then $4$ and $6$ are both cyclic points but neither car $4$ nor car $6$ are lucky, while $3$ is a non-cyclic point but is a lucky car. Intuitively, we would think that disjoint components in the digraph representation of a parking function correspond to non-interacting segments in the parking process, though this does not hold either. For example if $\pi_5=(1, 2, 1, 1, 2)$, then there are two components, $\{1, 3, 4\}$ and $\{2, 5\}$, but clearly the spots where cars $3$, $4$, and $5$ end up parking depend on the preferences of both cars $1$ and $2$. We hope to address these mysteries and more in future work.

We note that there is an alternate interpretation of parking function digraphs, investigated in \cite{KY18} and \cite{LP16}, by considering the vertices as the parking spaces and the directed edges as the one-way streets. Our investigation instead follows the classical interpretation of digraphs as done for mappings and permutations.

\section{Preliminaries} \label{Sec: Prelims}

In this section, we introduce the notations, definitions, and auxiliary results to be used throughout the paper. 

\subsection{Notations and definitions}

Let $a_n, b_n$ be two sequences. If $\lim_{n \to \infty} \frac{a_n}{b_n} = 1$, then we write $a_n\sim b_n$ and say that $a_n$ is {\em asymptotic} to $b_n$. If $\lim_{n \to \infty} \frac{a_n}{b_n} = 0$, then we write $a_n = o(b_n)$ and say that $a_n$ is {\em little-o} of $b_n$. Convergence in distribution and convergence in probability are denoted by $\xrightarrow{D}$ and $\xrightarrow{\PR}$, respectively.
If $X$ is a random variable distributed as $\nu$, then we write $X \sim \nu$. The standard normal random variable is denoted by $\Nc(0,1)$.  

We now introduce some special functions that are of use in our study of the cycle structure. Define the integral $E_1(x) = \int_x^\infty t^{-1} e^{-t} \, dt$, which is related to the {\em exponential integral} by $E_1(x)=-Ei(-x)$. The {\em $r$th generalized Dickman function}, $\rho_r$, is a function which satisfies the delay differential equation
\begin{align} \label{DDE}
    x\rho_r'(x) + \rho_r(x-1) = \rho_{r-1}(x-1), \qquad \text{for $x > 1$ and $r \geq 1$},
\end{align}
with initial conditions $\rho_r(x) = 1$ for $0 \leq x \leq 1$ and $\rho_0 = 0$. The special case $r = 1$ is called the {\em Dickman function} \cite{Dic30}, which appears throughout number theory. 

\subsection{Auxiliary results}

We state some auxiliary results for random permutations that we will use later in the paper. The first is Goncharov's central limit theorem for the number of cycles of a random permutation. 

\begin{proposition}[\cite{Gon44}] \label{GoncharovCLT}
    Let $K_n(\sigma_n)$ be the number of cycles of a uniformly random permutation $\sigma_n \in S_n$. Then the mean and variance of $K_n(\sigma_n)$ are respectively given by
    \begin{align}
    \E(K_n(\sigma_n))=\sum_{k=1}^n \frac{1}{k}, \qquad \VarR(K_n(\sigma_n))=\sum_{k=1}^n \left(\frac{1}{k}-\frac{1}{k^2}\right).   
    \end{align}
    Moreover, asymptotically we have
    \begin{align}
        \frac{K_n(\sigma_n) - \log n}{\sqrt{\log n}} \xrightarrow{D} \Nc(0,1)
    \end{align}
    as $n \to \infty$.
\end{proposition}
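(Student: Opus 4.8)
The plan is to reduce everything to a sum of independent Bernoulli random variables and then invoke a classical triangular-array central limit theorem. The key structural fact is that $K_n(\sigma_n)$ has the same distribution as $\sum_{j=1}^n X_j$, where $X_1,\dots,X_n$ are independent with $X_j \sim \Ber(1/j)$. To establish this I would compute the probability generating function of $K_n(\sigma_n)$: the number of permutations of $[n]$ with exactly $k$ cycles is the signless Stirling number of the first kind $c(n,k)$, and $\sum_k c(n,k)\,u^k = u(u+1)\cdots(u+n-1)$, so
\[
\E\!\left(u^{K_n(\sigma_n)}\right) \;=\; \frac{1}{n!}\prod_{j=1}^n (u+j-1) \;=\; \prod_{j=1}^n\!\left(\tfrac{j-1}{j} + \tfrac{1}{j}\,u\right),
\]
which is exactly the product of generating functions of independent $\Ber(1/j)$ variables. (Equivalently, the Feller coupling / Chinese restaurant construction of a uniform permutation produces this decomposition directly.)

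From independence I would then read off the first two moments: $\E(K_n(\sigma_n)) = \sum_{j=1}^n \frac1j$ and $\VarR(K_n(\sigma_n)) = \sum_{j=1}^n \frac1j\bigl(1-\frac1j\bigr) = \sum_{j=1}^n\bigl(\frac1j - \frac1{j^2}\bigr)$, which are the claimed formulas. Using the harmonic asymptotics $\sum_{j=1}^n \frac1j = \log n + \gamma + o(1)$ and $\sum_{j=1}^n \frac1{j^2} = \frac{\pi^2}{6} + o(1)$, both the mean and the variance are asymptotic to $\log n$.

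For the limit law I would apply the Lindeberg--Feller theorem to the triangular array $\bigl(X_j - \tfrac1j\bigr)_{1\le j\le n}$. Writing $s_n^2 := \VarR(K_n(\sigma_n)) \to \infty$ and noting $|X_j - \tfrac1j| \le 1$, for any $\varepsilon>0$ we have $\varepsilon s_n > 1$ for all large $n$, so every truncated contribution $\E\bigl[(X_j-\tfrac1j)^2\,\I\{|X_j-\tfrac1j|>\varepsilon s_n\}\bigr]$ vanishes and the Lindeberg condition holds trivially; hence $\frac{K_n(\sigma_n)-\E(K_n(\sigma_n))}{s_n} \xrightarrow{D} \Nc(0,1)$. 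Since $\E(K_n(\sigma_n)) - \log n = O(1) = o(\sqrt{\log n})$ and $s_n^2/\log n \to 1$, Slutsky's theorem lets me replace $\E(K_n(\sigma_n))$ by $\log n$ and $s_n$ by $\sqrt{\log n}$, giving the stated convergence. The only step with genuine content is the independent-Bernoulli representation; after that the argument is routine, and the boundedness of the summands makes the Lindeberg verification immediate, so I do not anticipate a real obstacle.
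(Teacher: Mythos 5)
Your proof is correct and complete: the independent-Bernoulli representation via the cycle-counting generating function $\sum_k c(n,k)u^k = u(u+1)\cdots(u+n-1)$ is the standard route to Goncharov's theorem, and the Lindeberg--Feller verification and Slutsky step are carried out properly. The paper itself cites this result to Goncharov without proof, but it relies on exactly the same decomposition $K_{0,n} \stackrel{D}{=} \sum_{k=1}^n Y_k$ with $Y_k \sim \Ber(1/k)$ independent in its proof of the large deviation principle, so your argument is fully consistent with how the result is used there.
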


We will also need the following result on the asymptotic mean length of the $r$th longest cycle of a random permutation, due to Shepp and Lloyd and recounted by Purdom and Williams. 

\begin{proposition}[\cite{SL66, PW68}] \label{Shepp-LloydLimit}
    Let $L_r(\sigma_n)$ be the length of the $r$th longest cycle of a uniformly random permutation $\sigma_n \in S_n$, defined to be $0$ if the number of cycles $K_n(\sigma_n)$ is less than $r$. Then
    \begin{align}
    E[L_r(\sigma_n)] = (G_{r,1} + \epsilon_{r,1,n})n,
    \end{align}
    where $G_{r,1} = \frac{1}{\Gamma(r)} \int_0^\infty E_1(x)^{r-1} e^{-E_1(x) - x} \, dx$, $\Gamma(r)$ is the Gamma function, $E_1(x)$ is the exponential integral, and $\lim_{n \to \infty} \epsilon_{r,1,n} = 0$. Therefore 
    \begin{align}
        \frac{\E[L_r(\sigma_n)]}{n} \to G_{r,1} 
    \end{align}
    as $n \to \infty$.  
\end{proposition}

The constant $G_{1,1}$ is called the Golomb-Dickman constant \cite{Gol64} and is approximately $0.6243$. Proposition \ref{Shepp-LloydLimit} thus says, for example, that the average length of the longest cycle of a random permutation of size $n$ is asymptotically $0.6243n$. 

Finally we will use the following result due to Knuth and Trabb Pardo on the asymptotic cumulative distribution function of the scaled $r$th longest cycle of a random permutation. Though their paper \cite{KT76} is mainly focused on large prime factors of large numbers, the discussion in Section 10  is about large cycles of random permutations. As Knuth and Trabb Pardo pointed out, the distribution of the number of digits in the prime factors of a random $m$-digit number is approximately the same as the distribution of the cycle lengths in a random permutation of $m$ elements, and the two topics are thus related.

\begin{proposition} [\cite{KT76}] \label{LongCyclesDickman}
Let $L_r(\sigma_n)$ be the length of the $r$th longest cycle of a uniformly random permutation $\sigma_n \in S_n$, defined to be $0$ if the number of cycles $K_n(\sigma_n)$ is less than $r$. Then 
\begin{align}
    \PR\left(\frac{L_r(\sigma_n)}{n} \leq y \right) \to \rho_r(1/y)
\end{align}
as $n \to \infty$, where $\rho_r$ is the $r$th generalized Dickman function.
\end{proposition}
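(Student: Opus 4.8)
The plan is to translate the statement into the asymptotics of a combinatorial recursion. Note first that $\tfrac{L_r(\sigma_n)}{n}\le y$ exactly when $\sigma_n$ has at most $r-1$ cycles of length greater than $m:=\lfloor yn\rfloor$. Writing $P_{n,r}(m)$ for the probability of this event under the uniform law on $S_n$ (with the conventions $P_{n,0}(m)=0$ and $P_{n,r}(m)=1$ whenever $n\le m$ and $r\ge 1$), we have $\PR\!\left(\tfrac{L_r(\sigma_n)}{n}\le y\right)=P_{n,r}(\lfloor yn\rfloor)$, and since $n/\lfloor yn\rfloor\to 1/y$ it suffices to show that $P_{n,r}(m)\to\rho_r(u)$ whenever $m\to\infty$ with $n/m\to u$.

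The recursion is supplied by the classical fact that, for uniform $\sigma_n\in S_n$, the length $K$ of the cycle through a fixed point is uniform on $[n]$ and, conditionally on $K$ and on the support of that cycle, the induced permutation on the other $n-K$ points is again uniform. Splitting according to whether this distinguished cycle is long ($K>m$) or short ($K\le m$) gives, for $r\ge1$,
\[
n\,P_{n,r}(m)=\sum_{k=1}^{\min(m,n)}P_{n-k,r}(m)+\sum_{k=m+1}^{n}P_{n-k,r-1}(m).
\]
Subtracting this identity at $n$ and at $n-1$ telescopes both sums, leaving for $n\ge m+2$ the two-term relation
\[
n\bigl(P_{n,r}(m)-P_{n-1,r}(m)\bigr)=P_{n-m-1,\,r-1}(m)-P_{n-m-1,\,r}(m).
\]
Now let $n/m\to u$ with $u>1$ fixed. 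If $P_{n,r}(m)\to\rho_r(u)$, then increasing $n$ by $1$ is a step of size $1/m$ in the variable $u$, so the left-hand side behaves like $\tfrac{n}{m}\rho_r'(u)\to u\,\rho_r'(u)$, while $n-m-1$ corresponds to the value $u-1$, so the right-hand side tends to $\rho_{r-1}(u-1)-\rho_r(u-1)$. This is precisely $x\rho_r'(x)+\rho_r(x-1)=\rho_{r-1}(x-1)$; combined with the boundary data ($P_{n,r}(m)=1$ for $n\le m$, $r\ge1$, and $P_{n,0}\equiv0$) matching $\rho_r\equiv1$ on $[0,1]$ and $\rho_0\equiv0$, uniqueness of the solution to the generalized Dickman equation forces the limit to be $\rho_r$.

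All of the content lies in justifying this passage to the limit. I would do this by (i) extracting from the recursion the a priori bound $|P_{n,r}(m)-P_{n-1,r}(m)|\le 1/n$ (immediate, since the right-hand side of the two-term relation lies in $[-1,0]$), which also shows that $P_{n,r}(m)$ is nonincreasing in $n$; (ii) deducing that the functions $u\mapsto P_{\lceil um\rceil,r}(m)$ are uniformly bounded and equicontinuous on compact subsets of $(0,\infty)$, so Arzel\`a--Ascoli yields subsequential uniform limits; and (iii) rewriting the two-term relation in summed (integral) form, passing to the limit along such a subsequence, identifying every subsequential limit as a solution of the delay differential equation with the stated boundary conditions, and then invoking uniqueness and inducting on $r$ (base case $\rho_0\equiv0$). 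I expect step (iii)---cleanly extracting the differential equation, and in particular the absolute continuity of the limit, from the discrete relation---to be the main technical point; everything else is bookkeeping. An alternative route to existence of the limit is to invoke Kingman's theorem that $\bigl(\tfrac{L_1(\sigma_n)}{n},\dots,\tfrac{L_r(\sigma_n)}{n}\bigr)$ converges in distribution to the first $r$ coordinates of $\mathrm{PD}(1)$, reducing the claim to computing the CDF of the $r$th largest $\mathrm{PD}(1)$ atom, which one then shows satisfies the same delay differential equation by conditioning on the largest atom through the stick-breaking representation. A third, more analytic route---in the spirit of Knuth--Trabb Pardo and de Bruijn---is singularity analysis of the exponential generating function $\exp\!\bigl(\sum_{k\le m}x^k/k\bigr)$ and its variants carrying $r-1$ long-cycle factors.
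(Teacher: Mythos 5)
The paper does not actually prove this proposition: it is quoted in the ``Auxiliary results'' subsection with a citation to Knuth and Trabb Pardo, so there is no in-paper argument to compare against. Judged on its own, your proposal is correct and essentially complete as an outline. The translation of $\{L_r(\sigma_n)/n\le y\}$ into ``at most $r-1$ cycles of length greater than $\lfloor yn\rfloor$'' is right, the renewal-type recursion obtained by conditioning on the cycle through a fixed element is the standard one, and the telescoped relation
\[
n\bigl(P_{n,r}(m)-P_{n-1,r}(m)\bigr)=P_{n-m-1,\,r-1}(m)-P_{n-m-1,\,r}(m)
\]
checks out for $n\ge m+2$; since $P_{n,r-1}\le P_{n,r}$, the right side lies in $[-1,0]$, giving both monotonicity in $n$ and the $1/n$ increment bound you use for equicontinuity. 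Your plan for step (iii) --- work with the summed form $f_r(u)=1+\int_1^u v^{-1}\bigl(\rho_{r-1}(v-1)-f_r(v-1)\bigr)\,dv$, identify subsequential limits as solutions of this integral equation, and conclude by the method of steps together with induction on $r$ (base case $\rho_0\equiv 0$) --- is the right move, and in particular sidesteps the fact that $\rho_r$ is not differentiable at $u=1$, so the delay ODE should indeed only be invoked in integrated form. Two bookkeeping points to watch: the two-term relation fails at $n=m+1$, so the boundary term $P_{m+1,r}(m)=1-O(1/m)$ must be absorbed separately before summing; and the induction requires uniform-on-compacts convergence of $u\mapsto P_{\lceil um\rceil,\,r-1}(m)$ to $\rho_{r-1}$, which your Arzel\`a--Ascoli step does supply. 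The alternative routes you mention (Kingman's Poisson--Dirichlet limit, or the generating-function/singularity analysis) are also viable; the latter is essentially the method of the cited source.
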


\section{Distribution of cyclic points}
\label{sec:dist}
In this section, we count the number of classical and prime parking functions of length $n$ with exactly $k$ cyclic points. We then use this to obtain the asymptotic distribution of the number of cyclic points in a uniformly random parking function.

\begin{theorem} \label{classical-cyclic}
Let $1\leq k \leq n$. Let $\vert \PF_n^{(k)} \vert$ denote the number of parking functions of length $n$ with $k$ cyclic points. Then
\begin{align}
|\PF_n^{(k)}| = \begin{cases}
n! & k = n, \\
\binom{n+1}{k} k k! (n+1)^{n-k-2} & 1 \leq k < n.
\end{cases}
\end{align}
\end{theorem}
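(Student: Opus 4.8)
The plan is to decompose a parking function according to its cyclic structure and count via a bijective/combinatorial argument. First I would handle the case $k = n$: a parking function with all $n$ points cyclic has a digraph in which every component is a pure cycle with only trivial trees attached, so it is a permutation; hence $|\PF_n^{(n)}| = |S_n| = n!$. This matches the first branch.

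For $1 \le k < n$, the key structural observation — emphasized in the introduction via Figure~\ref{fig:figure2} — is that a parking function's digraph consists of a set of rooted trees whose roots lie on cycles, and that permuting the rooted trees (as integral blocks) among themselves always preserves the parking property. So I would build a parking function in stages: (i) choose which values in $[n]$ form the $k$ cyclic points and the cyclic permutation structure on them; (ii) choose the forest of rooted trees hanging off those cyclic points that uses the remaining $n-k$ non-cyclic labels; (iii) verify that the parking condition imposes exactly the constraint that is captured by Pollak's circular-symmetry device. The cleanest route is probably to mimic Pollak's argument on a circle of $n+1$ spots: realize a parking function of length $n$ with a prescribed "skeleton" (the cyclic part plus the shape of the attached forest) as one of $n+1$ cyclic rotations of an arbitrary preference list, exactly one of which parks. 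This should convert the count into (number of ways to build the cyclic skeleton) $\times$ (number of ways to build and attach the forest) divided by $n+1$, and I expect the forest count to bring in a factor of the form $k(n+1)^{n-k-1}$ by a generalized Cayley/Pollak count of forests of $n-k$ non-root vertices attached to $k$ specified roots, with an extra degree of freedom absorbed by the circular argument.

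Concretely, I would aim to show: the number of ways to pick the $k$ cyclic values as a subset of a set of size $n+1$ and linearly/cyclically order them appropriately contributes $\binom{n+1}{k} k\, k!$ — the $\binom{n+1}{k}$ choosing the cyclic labels among $n+1$ "slots," the $k!$ arranging them into a permutation on $k$ points, and the extra $k$ coming from a marked element used to break the circular symmetry — while the attachment of the remaining $n-k$ tree-vertices, counted with the same circular trick, contributes $(n+1)^{n-k-2}$. Multiplying gives $\binom{n+1}{k} k\, k! (n+1)^{n-k-2}$, as claimed. The main obstacle will be step (iii): making the Pollak-style circular argument rigorous in the presence of a prescribed cyclic skeleton, i.e. proving that among the $n+1$ rotations of a suitable encoding exactly one yields a genuine parking function with the prescribed set of cyclic points, rather than merely a parking function. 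This requires carefully separating the "cyclic data" (which is permutation-like and rotation-insensitive) from the "tree data" (where the rotation acts), and checking that the map from encodings to (parking function, cyclic-point set) pairs is exactly $(n+1)$-to-one. Verifying the exponent bookkeeping — that the two circular reductions together remove a factor $(n+1)^2$, accounting for the $n-k-2$ rather than $n-k$ — is the delicate accounting I would check last.
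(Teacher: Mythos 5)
Your case $k=n$ is fine, and you have correctly identified the structural ingredients: the cyclic points carry a permutation, rooted trees hang off them, and Pollak's circle of $n+1$ spots should be involved (indeed the answer is the mapping count $\binom{n}{k}k\,k!\,n^{n-k-1}$ with $n$ replaced by $n+1$ and one extra division by $n+1$, which is suggestive). But the step you defer to the end --- ``proving that among the $n+1$ rotations of a suitable encoding exactly one yields a genuine parking function with the prescribed set of cyclic points'' --- is not a final bookkeeping check; it is the entire difficulty, and as stated it fails for two reasons. First, the rotation $f\mapsto f+j \pmod{n+1}$ does not act on functional digraphs in any way that preserves cycles, so Pollak's lemma produces one parking function among the $n+1$ rotations but says nothing about which of its vertices are cyclic; ``exactly one rotation is a parking function \emph{with the prescribed cyclic points}'' is not a consequence of the circular symmetry and is not true in general. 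Second, even after fixing a cyclic skeleton on $k$ chosen points, letting the remaining $n-k$ vertices choose targets freely allows the attached part to close up into additional cycles, so a product of the form $(\text{skeleton count})\times(\text{forest count})$ counts configurations with \emph{at least} the prescribed cyclic structure, not exactly $k$ cyclic points. Your proposed factorization --- with the factor $k$ attributed to ``a marked element used to break the circular symmetry'' and two divisions by $n+1$ --- is reverse-engineered from the target formula rather than derived.

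The paper's proof (extending Rubey--Yin) confronts exactly this overcounting with an inclusion--exclusion over the unwanted extra cycles, indexed by multiplicities $(\ell_1,\dots,\ell_n)$ with sign $(-1)^{\sum_i \ell_i}$, layered on a sum over the cycle types $(k_1,\dots,k_n)$ of the genuine cyclic part. The closed form then emerges from two cancellation identities: $\sum_{\sum_i i k_i=k} 1\big/\prod_i i^{k_i}k_i! = 1$, so the cycle type of the genuine part integrates out, and $\sum_{p\vdash j}(-1)^{\ell(p)}/z_p$ equals $1$, $-1$, $0$ for $j=0$, $j=1$, $j\ge 2$ respectively (there are equally many even and odd permutations in $S_j$ for $j\ge 2$), which collapses the correction to the terms $j\in\{0,1\}$ and produces the factor $(n+1)-(n+1-k)=k$. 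So the factor $k$ and the exponent $n-k-2$ are artifacts of a sign cancellation, not of independent combinatorial choices. To make your route work you would need a substitute for that cancellation --- for instance a sign-reversing involution on configurations carrying extra cycles --- and without one the argument has a genuine gap.
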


\begin{proof}
We note that a parking function of length $n$ has $k=n$ cyclic points if and only if the parking preferences constitute a permutation of $1, \dots, n$, hence $|\PF_n^{(n)}|=n!$. Now for the general case $1\leq k<n$, we denote by $k_i(\pi_n)$ the number of $i$-cycles in the digraph representation of $\pi_n$. Since there are $k$ cyclic points, we must have $\sum_{i=1}^n k_i(\pi_n) i=k$. We will write $k_i$ instead of $k_i(\pi_n)$ for notational convenience when it is clear from context. Define $M(m, k_1, \dots, k_n)$ to be the multinomial coefficient
$$M(m, k_1, \dots, k_n)=\binom{m}{\underbracket[0.5pt]{1, \dots, 1}_{k_1 \hspace{.1cm} \text{$1$'s}}, \dots, \underbracket[0.5pt]{n, \dots, n}_{k_n \hspace{.1cm} \text{$n$'s}}, m-\sum_{i=1}^n k_i i}.$$

We use an extension of the ideas in Rubey-Yin \cite{RY25}[Theorem 2.1, Theorem 2.3] and highlight key steps in the derivation. Let $1\leq m \leq n$. For distinct $1\leq i_1, \dots, i_m\leq n$, let $$A_{(i_1, \dots, i_m)}=\{\pi_n \in \PF_n: \pi_n(i_1)=i_2, \pi_n(i_2)=i_3, \dots, \pi_n(i_{m-1})=i_m, \pi_n(i_m)=i_1\}.$$ The number of parking functions $\pi_n$ of length $n$ where $G_{\pi_n}$ has $k_i$ cycles of length $i$ for every $i \in [n]$ may be represented by the following formula:
\begin{multline}\label{counts-0}
\left(\prod_{m=1}^n \left((m-1)!\right)^{k_m}\right) \sum_{\substack{(i^j_1, \dots, i^j_m) \\ 1\leq j\leq k_m; \, 1\leq m\leq n}} \Bigg \vert \left(\bigcap_{1\leq j\leq k_m; \, 1\leq m\leq n} A_{(i^j_1, \dots, i^j_m)} \right)  \\ \bigcap \left(\bigcap_{\substack{(i^0_1, \dots, i^0_{r}) \text{ and } (i^j_1, \dots, i^j_m) \text{ are non-overlapping} \\ 1\leq j\leq k_m; \, 1\leq m\leq n; \, 1\leq r\leq n}} A^c_{(i^0_1, \dots, i^0_{r})} \right) \Bigg \vert,
\end{multline}
where the sum is over all possible $k_m$ $m$-cycles for every $m \in [n]$, and all the cycles are pairwise non-overlapping, i.e., we are choosing altogether $\sum_{m=1}^n k_m m=k$ distinct points from $[n]$ and assign them to different cycles. To avoid overcounting, for every $m \in [n]$, we arrange the $m$-cycles $(i^j_1, \dots, i^j_m)$ so that for each $1\leq j\leq k_m$, $i^j_1<\cdots<i^j_m$ (the entries of the cycles are in increasing order, which is allowed by symmetry of parking coordinates) and further $i^1_1<\cdots<i^{k_m}_1$ (the $m$-cycles are listed by increasing order of their first entries). This convention introduces an additional scalar factor $(m-1)!$ for each such $m$-cycle, which accounts for the number of ways of arranging an $m$-cycle with distinct entries. We use $A^c$ to denote the complement of the set $A$. We restrict ourselves to sets $A^c_{(i^0_1, \dots, i^0_{r})}$ where $1\leq r\leq n$ and $(i_1^0, \dots, i^0_{r})$ is non-overlapping with any $(i^j_1, \dots, i^j_m)$ where $1\leq j\leq k_m$ and $1\leq m\leq n$, because $A_{(i^j_1, \dots, i^j_m)} \subseteq A^c_{(i^0_1, \dots, i^0_{r})}$ if some entries of $(i^0_1, \dots, i^0_{r})$ and $(i^j_1, \dots, i^j_m)$ coincide (unless $(i^0_1, \dots, i^0_{r})$ constitutes a circular rotation of $(i^j_1, \dots, i^j_m)$ in which case $A_{(i^j_1, \dots, i^j_m)}=A_{(i^0_1, \dots, i^0_{r})}$). Thus (\ref{counts-0}) counts parking functions of length $n$ with exactly $k_m$ cycles of length $m$ for every $m \in [n]$.

By De Morgan's law and the inclusion-exclusion principle, (\ref{counts-0}) equals
\begin{multline}\label{counts-1}
\sum_{(\ell_1, \dots, \ell_n): \sum_{m=1}^n (k_m+\ell_m)m \leq n} (-1)^{\sum_{m=1}^n \ell_m} \ \prod_{m=1}^n \left((m-1)!\right)^{k_m+\ell_m} \\
\times \sum_{\substack{(i^j_1, \dots, i^j_m) \\ 1\leq j\leq k_m; \, 1\leq m\leq n}} \sum_{\substack{(s^t_1, \dots, s^t_m) \\ 1\leq t\leq \ell_m; \, 1\leq m\leq n}}
\Bigg \vert \left(\bigcap_{1\leq j\leq k_m; \, 1\leq m\leq n} A_{(i^j_1, \dots, i^j_m)} \right) \bigcap \left(\bigcap_{1\leq t\leq \ell_m; \, 1\leq m\leq n} A_{(s^t_1, \dots, s^t_m)} \right) \Bigg \vert,
\end{multline}
where the middle sum is over all possible $k_m$ $m$-cycles for every $m \in [n]$, and the last sum is over all possible $\ell_m$ $m$-cycles for every $m \in [n]$, and all the cycles are pairwise non-overlapping, i.e., we are choosing altogether $\sum_{m=1}^n (k_m+\ell_m)m \leq n$ distinct points from $[n]$ and assign them to different cycles. As previously, to avoid overcounting, for every $m \in [n]$, we arrange the $m$-cycles $(i^j_1, \dots, i^j_m)$ so that for each $1\leq j\leq k_m$, $i^j_1<\cdots<i^j_m$ and further $i^1_1<\cdots<i^{k_m}_1$, and similarly for each $1\leq t\leq \ell_m$, $s^t_1<\cdots<s^t_m$ and further $s^1_1<\cdots<s^{\ell_m}_1$. This convention introduces an additional scalar factor $(m-1)!$ for each such $m$-cycle.

We use symmetry of parking coordinates to rewrite the complicated sums in (\ref{counts-1}):
\begin{align}\label{counts-2}
&\sum_{\substack{(i^j_1, \dots, i^j_m) \\ 1\leq j\leq k_m; \, 1\leq m\leq n}} \sum_{\substack{(s^t_1, \dots, s^t_m) \\ 1\leq t\leq \ell_m; \, 1\leq m\leq n}}
\Bigg \vert \left(\bigcap_{1\leq j\leq k_m; \, 1\leq m\leq n} A_{(i^j_1, \dots, i^j_m)} \right) \bigcap \left(\bigcap_{1\leq t\leq \ell_m; \, 1\leq m\leq n} A_{(s^t_1, \dots, s^t_m)} \right) \Bigg \vert \notag \\
=& \frac{1}{\prod_{m=1}^n k_m!\ell_m!} \Bigg\vert\Bigg\{\pi_n \in \PF_n: \notag \\ & \hspace{-.2cm} \quad
\substack{\pi_n(1), \dots, \pi_n(k_1+\ell_1), \dots, \pi_n\left(\sum_{m=1}^2 (k_m+\ell_m)m\right), \dots, \pi_n\left(\sum_{m=1}^n (k_m+\ell_m)m\right) \text{ are pairwise distinct} \\ \pi_n\left(\sum_{m=1}^{r-1} (k_m+\ell_m)m+j\right)<\pi_n\left(\sum_{m=1}^{r-1} (k_m+\ell_m)m+j+1\right)<\cdots<\pi_n\left(\sum_{m=1}^{r-1} (k_m+\ell_m)m+j+r-1\right) \\ j=1, r+1, 2r+1, \dots, (k_r+\ell_r-1)r+1; \, 2\leq r\leq n} \Bigg\} \Bigg\vert.
\end{align}

Next we apply an extension of Pollak's circle argument. Add an additional parking spot $n+1$, and arrange the spots in a circle. Allow $n+1$ also as a preferred spot. We first select $k_m+\ell_m$ $m$-cycles on the circle for every $m \in [n]$ and designate these spots to the first $\sum_{m=1}^n (k_m+\ell_m)m$ cars. Then for the remaining $n-\sum_{m=1}^n (k_m+\ell_m)m$ cars, there are $(n+1)^{n-\sum_{m=1}^n (k_m+\ell_m)m}$ possible preference sequences. Out of the $n+1$ rotations for any preference sequence, only one rotation becomes a valid parking function. Standard circular symmetry argument yields (\ref{counts-2}) equals
\begin{align*}
\frac{1}{\prod_{m=1}^n k_m!\ell_m!} M(n+1, k_1+\ell_1, \dots, k_n+\ell_n) (n+1)^{n-1-\sum_{m=1}^n (k_m+\ell_m)m}.
\end{align*}

Putting everything together,
\begin{align}\label{long-cyclic}
&|\PF_n^{(k)}| = \sum_{(k_1, \dots, k_n): \sum_{i=1}^n k_i i=k} \
\sum_{(\ell_1, \dots, \ell_n): \sum_{i=1}^n (k_i+\ell_i)i \leq n} (-1)^{\sum_{i=1}^n \ell_i} \ \frac{\prod_{i=1}^n \left((i-1)!\right)^{k_i+\ell_i}}{\prod_{i=1}^n k_i!\ell_i!} \notag \\
& \hspace{3cm} \times  M(n+1, k_1+\ell_1, \dots, k_n+\ell_n) (n+1)^{n-1-\sum_{i=1}^n (k_i+\ell_i)i} \notag \\
&=\sum_{(k_1, \dots, k_n): \sum_{i=1}^n k_i i=k} \frac{\prod_{i=1}^n \left((i-1)!\right)^{k_i}}{\prod_{i=1}^n k_i!} M(n+1, k_1, \dots, k_n) \notag \\ \
& \hspace{.5cm} \times \sum_{(\ell_1, \dots, \ell_n): \sum_{i=1}^n \ell_i i \leq n-k} (-1)^{\sum_{i=1}^n \ell_i} \frac{\prod_{i=1}^n \left((i-1)!\right)^{\ell_i}}{\prod_{i=1}^n \ell_i!} M(n+1-k, \ell_1, \dots, \ell_n) (n+1)^{n-1-k-\sum_{i=1}^n \ell_i i} \notag \\
&=k! \binom{n+1}{k} (n+1)^{n-k-2} \sum_{(k_1, \dots, k_n): \sum_{i=1}^n k_i i=k} \frac{1}{\prod_{i=1}^n i^{k_i} k_i!} \notag \\
&\hspace{1.5cm} \times \sum_{(\ell_1, \dots, \ell_n): \sum_{i=1}^n \ell_i i \leq n-k} (-1)^{\sum_{i=1}^n \ell_i} \frac{(n+1-k)!}{\prod_{i=1}^n i^{\ell_i} \ell_i! (n+1-k-\sum_{i=1}^n \ell_i i)!} (n+1)^{1-\sum_{i=1}^n \ell_i i}.
\end{align}

For parking functions $\pi_n \in \PF_n$, having $k$ cyclic points forces $k_i=0$ for all $k<i\leq n$ and so $\prod_{i=1}^k i^{k_i} k_i!=\prod_{i=1}^n i^{k_i} k_i!$. From Riordan \cite[page 67]{Riordan}, $k! /\prod_{i=1}^k i^{k_i} k_i!$ records the number of permutations in $S_k$ of cycle type $(1^{k_1}, 2^{k_2}, \dots)$\footnote{One way to see this is to insert pairs of parentheses from left to
right into the one-line notation for the permutation according to its cycle type, and then note that for each $i$ we can permute the $k_i$ distinct cycles of length-$i$ and a length-$i$ cycle can have $i$ possible starting points.}. This implies that
\begin{align}\label{eq:cycle_type}
\sum_{(k_1, \dots, k_n): \sum_{i=1}^n k_i i=k} \frac{1}{\prod_{i=1}^n i^{k_i} k_i!}=1.
\end{align}
The first sum in (\ref{long-cyclic}) thus disappears. 

Let $j=\sum_{i=1}^n \ell_i i$. Then the second sum in (\ref{long-cyclic}) is over all partitions $p$ of $j$ for $0\leq j\leq n-k$. For notational convenience, denote by $\ell(p)=\sum_{i=1}^n \ell_i$ and $z_p=\prod_{i=1}^n i^{\ell_i} \ell_i!$ as in Macdonald \cite[page 1, page 24]{Mac95}, Equation (\ref{long-cyclic}) becomes
\begin{align}\label{last-sum}
&k! \binom{n+1}{k} (n+1)^{n-k-2} \sum_{j=0}^{n-k} \sum_{p\vdash j} (-1)^{\ell(p)} \frac{(n+1-k)!}{z_p (n+1-k-j)!} (n+1)^{1-j} \notag \\
&=k! \binom{n+1}{k} (n+1)^{n-k-2} \sum_{j=0}^{n-k} \frac{(n+1-k)!}{(n+1-k-j)!} (n+1)^{1-j} \sum_{p\vdash j} (-1)^{\ell(p)} \frac{1}{z_p}.
\end{align}

We note that the last sum in (\ref{last-sum}) is equal to $1$ if $j=0$, $-1$ if $j=1$, and $0$ if $j\geq 2$ (because there are as many even permutations in $S_j$ as odd permutations). Since $k<n$, $j=0$ and $1$ are both valid, (\ref{last-sum}) is reduced to
\begin{align}
&k! \binom{n+1}{k} (n+1)^{n-k-2} \sum_{j=0}^1 \frac{(n+1-k)!}{(n+1-k-j)!}(n+1)^{1-j}(-1)^j  \\
&=\binom{n+1}{k} k k! (n+1)^{n-k-2}. \qedhere
\end{align}
\end{proof}

\begin{remark}\label{rk:exp_cyclic}
Implicitly, the proof of Theorem \ref{classical-cyclic} establishes a more refined result: The number of parking functions $\pi_n$ of length $n$ where $G_{\pi_n}$ has $k_i$ cycles of length $i$ for every $i \in [n]$ is given by
\begin{equation*}
\begin{cases}
\frac{1}{\prod_{i=1}^n i^{k_i} k_i!} n! & \text{ if } k = n, \\
\frac{1}{\prod_{i=1}^n i^{k_i} k_i!}\binom{n+1}{k} k k! (n+1)^{n-k-2} & \text{ if } 1 \leq k < n,
\end{cases}
\end{equation*}
where $k:=\lambda_n(\pi_n)=\sum_{i=1}^n k_i i$ is the number of cyclic points of $\pi_n$. Here we are not summing over different cycle types $(1^{k_1}, 2^{k_2}, \dots)$ of $S_k$ as in (\ref{eq:cycle_type}) and so the scalar factor $\frac{1}{\prod_{i=1}^n i^{k_i} k_i!}$ stays. This further implies that the cyclic points of $\pi_n$ form a uniform permutation of size $\lambda_n(\pi_n)$.
\end{remark}

\begin{theorem}\label{prime-cyclic}
Let $1\leq k \leq n-1$. Let $\vert \PPF_n^{(k)} \vert$ denote the number of prime parking functions of length $n$ with $k$ cyclic points. Then
\begin{align}
|\PPF_n^{(k)}| = kk!\binom{n-1}{k}(n-1)^{n-k-2}.
\end{align}
\end{theorem}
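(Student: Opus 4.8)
The plan is to mirror the proof of Theorem \ref{classical-cyclic}, but starting from the prime-parking-function analogue of the Rubey--Yin formula instead of the classical one. The key structural input is that prime parking functions, like classical ones, have digraph representations whose components consist of a cycle with rooted trees attached; permuting whole rooted trees preserves primality just as it preserves the parking property. So I would expect an inclusion--exclusion expansion entirely parallel to \eqref{long-cyclic}, with the multinomial coefficient $\binom{n+1}{1,\dots,1,\dots,n,\dots,n,\,n+1-\sum(k_i+\ell_i)i}$ replaced by $\binom{n-1}{1,\dots,1,\dots,n,\dots,n,\,n-1-\sum(k_i+\ell_i)i}$ and the overall power $(n+1)^{n-1-\sum(k_i+\ell_i)i}$ replaced by $(n-1)^{n-1-\sum(k_i+\ell_i)i}$, reflecting that $|\PPF_n| = (n-1)^{n-1}$ and that the ``spine'' structure that glues cyclic data to tree data uses $n-1$ rather than $n+1$. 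I would want to state this starting formula cleanly, citing the prime-parking-function version of \cite{RY25}[Theorem 2.3] (or re-deriving it by the same Pollak--Kalikow circular argument adapted to count components with prescribed cycle lengths).

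Given that starting formula, the computation proceeds exactly as before. First I would factor off the $k$-dependent part $k!\binom{n-1}{k}(n-1)^{n-k-2}$, pulling the pure cycle-type sum $\sum_{(k_1,\dots,k_n):\sum k_i i = k} \frac{1}{\prod_i i^{k_i}k_i!}$ out front. Second, I invoke the identity that this sum equals $1$, since $k!/\prod_i i^{k_i}k_i!$ counts permutations in $S_k$ of cycle type $(1^{k_1},2^{k_2},\dots)$ and these partition $S_k$. Third, setting $j = \sum_i \ell_i i$, the remaining inner sum becomes $\sum_{j=0}^{n-1-k} \frac{(n-k)!}{(n-k-j)!}(n-1)^{1-j}\sum_{p\vdash j}(-1)^{\ell(p)}\frac{1}{z_p}$, and the partition sum $\sum_{p\vdash j}(-1)^{\ell(p)}/z_p$ is $1$ for $j=0$, $-1$ for $j=1$, and $0$ for $j\ge 2$ (equinumerosity of even and odd permutations in $S_j$). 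Fourth, since we assume $k \le n-1$, both $j=0$ and $j=1$ are in range, so the telescoped sum is $(n-1) - (n-k) = k - 1$... wait, I should be careful here: it is $(n-1)^{1}\cdot 1 + \frac{(n-k)!}{(n-k-1)!}(n-1)^{0}\cdot(-1) = (n-1) - (n-k) = k-1$, which would give $(k-1)$ rather than the claimed $k$. This discrepancy tells me the exponent bookkeeping in the starting formula must differ slightly from my naive guess — most likely the overall power is $(n-1)^{n-k-2}$ paired with an inner $(n+1)\to$ something, or the multinomial's last block is $n-k$ rather than $n+1-k$; I would need to pin down the prime analogue of the Rubey--Yin identity precisely so that the $j=0,1$ terms combine to $k$. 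Once the constants are correctly tracked, the final answer collapses to $kk!\binom{n-1}{k}(n-1)^{n-k-2}$.

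The main obstacle, then, is not the algebra — which is a routine replay of the classical case — but establishing the correct prime-parking-function version of the enumeration formula \eqref{long-cyclic} with the exact powers and binomial arguments, so that the two surviving terms ($j=0$ and $j=1$) combine to exactly $k$. I would handle this by carefully redoing the circular-symmetry / inclusion--exclusion derivation of \cite{RY25} in the prime setting: count labeled configurations of a cycle-type-$(k_1,\dots,k_n)$ collection of cycles together with a forest of rooted trees hanging off them and off a ``phantom'' structure encoding the primality constraint, using Kalikow's modification of Pollak's argument to get the clean power of $(n-1)$. A secondary (minor) point to verify is the edge behavior: unlike the classical theorem there is no $k=n$ case here (a prime parking function of length $n\ge 2$ cannot be a permutation, since removing a coordinate equal to $1$ must still leave a parking function, forcing $k \le n-1$), so the single-formula statement is consistent, and I would note this briefly.
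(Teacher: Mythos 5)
Your overall strategy is the paper's: replay the inclusion--exclusion computation of Theorem \ref{classical-cyclic}, with Pollak's circular argument now run on a circle of $n-1$ spots rather than $n+1$, i.e.\ with $n+1$ replaced by $n-1$ uniformly throughout \eqref{long-cyclic}. However, as written your proposal does not close the argument: you end with an unresolved discrepancy (the telescoped sum coming out to $k-1$ instead of $k$) and a promise to go back and ``pin down'' the correct starting formula. That discrepancy is not evidence that the starting formula differs from the naive substitution --- it is an arithmetic slip in your own third step. You correctly state that the multinomial should become $\binom{n-1}{1,\dots,1,\dots,n,\dots,n,\;n-1-\sum_i(k_i+\ell_i)i}$, whose last block is $n-1-k-j$ with $j=\sum_i\ell_i i$; after pulling out $k!\binom{n-1}{k}(n-1)^{n-k-2}$ the remaining falling factorial is therefore $\frac{(n-1-k)!}{(n-1-k-j)!}$, not $\frac{(n-k)!}{(n-k-j)!}$ as you wrote. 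With the correct factor the two surviving terms give $(n-1)\cdot 1-(n-1-k)\cdot 1=k$, exactly as required, and the rest of your computation (the cycle-type sum equal to $1$; the partition sum equal to $1,-1,0$ for $j=0,1,\ge 2$) is fine. So what you identify as the ``main obstacle'' dissolves once the substitution $n+1\mapsto n-1$ is carried out consistently; this is precisely what the paper does.

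Two smaller remarks. First, the prime analogue of the Rubey--Yin enumeration formula is asserted rather than rederived in the paper as well (``using the same reasoning\dots with minor adaptation''), so your level of detail on that input is comparable to the paper's; if you do want to justify it, your plan of rerunning Kalikow's modification of Pollak's argument, together with the observation that permuting whole rooted trees preserves primality, is the right one. Second, your edge-case observation --- that a prime parking function of length $n\ge 2$ cannot have $n$ cyclic points because it would then be a permutation, which fails primality --- is correct and matches the paper's remark.
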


\begin{proof}
The proof proceeds similarly as in the proof of Theorem \ref{classical-cyclic} for classical parking functions, except that our circular symmetry argument is now applied to a circle with $n-1$ spots instead of $n+1$ spots as in the classical case. Note that for prime parking functions it is impossible to have $n$ cyclic points, since by definition it must have at least two coordinates equalling $1$, and so we must have $1 \leq k\leq n-1$. Using the same reasoning as in the proof of Theorem \ref{classical-cyclic} (with minor adaptation), we will in the end arrive at the following equation:
\begin{align}
&|\PPF_n^{(k)}|=k! \binom{n-1}{k} (n-1)^{n-k-2} \sum_{j=0}^1 \frac{(n-1-k)!}{(n-1-k-j)!}(n-1)^{1-j}(-1)^j \\
&=k k! \binom{n-1}{k} (n-1)^{n-k-2}. \qedhere
\end{align}
\end{proof}

\begin{remark}
While we present the enumeration result for cyclic points in prime parking functions above, the asymptotic investigation for prime parking functions will be left later for Section \ref{sec:prime} as this investigation for prime parking functions is largely in parallel with that of classical parking functions. The rest of Section \ref{sec:dist} as well as Sections \ref{sec:trio} and \ref{sec:len} will focus exclusively on classical parking functions.
\end{remark}

\vskip.1truein

Recall from the introduction that $\lambda_n = \lambda_n(\pi_n)$ denotes the number of cyclic points in a uniformly random parking function $\pi_n \in \PF_n$. We now turn to the asymptotic behavior of $\lambda_n$, starting with the probability mass function. 

\begin{lemma} \label{PFcyclicPMF}
Let $\lambda_n(\pi_n)$ be the number of cyclic points of a uniformly random parking function $\pi_n \in \PF_n$. Then
\begin{align}
\PR(\lambda_n(\pi_n) = k) = \begin{cases}
\frac{n!}{(n+1)^{n-1}} & k = n, \\
\frac{k n!}{(n+1)^k (n+1-k)!} & 1 \leq k < n.
\end{cases}
\end{align}
\end{lemma}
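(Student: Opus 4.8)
The plan is to derive the probability mass function directly from the enumeration result of Theorem~\ref{classical-cyclic} together with Pollak's count $|\PF_n| = (n+1)^{n-1}$. Since $\pi_n$ is uniformly random on $\PF_n$, we have
\[
\PR(\lambda_n(\pi_n) = k) = \frac{|\PF_n^{(k)}|}{|\PF_n|} = \frac{|\PF_n^{(k)}|}{(n+1)^{n-1}}.
\]
So the entire proof is a substitution followed by routine algebraic simplification; there is essentially no obstacle, just bookkeeping.

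First I would handle the boundary case $k = n$. By Theorem~\ref{classical-cyclic}, $|\PF_n^{(n)}| = n!$, so $\PR(\lambda_n(\pi_n) = n) = \frac{n!}{(n+1)^{n-1}}$, which matches the claimed formula immediately.

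Next, for $1 \leq k < n$, Theorem~\ref{classical-cyclic} gives $|\PF_n^{(k)}| = \binom{n+1}{k} k\, k!\, (n+1)^{n-k-2}$. Dividing by $(n+1)^{n-1}$ yields
\[
\PR(\lambda_n(\pi_n) = k) = \frac{\binom{n+1}{k} k\, k!\, (n+1)^{n-k-2}}{(n+1)^{n-1}} = \frac{\binom{n+1}{k} k\, k!}{(n+1)^{k+1}}.
\]
Then I would expand $\binom{n+1}{k} = \frac{(n+1)!}{k!\,(n+1-k)!}$, so that the $k!$ cancels and one factor of $(n+1)$ from $(n+1)!= (n+1)\cdot n!$ reduces the denominator power from $k+1$ to $k$, giving
\[
\PR(\lambda_n(\pi_n) = k) = \frac{(n+1)!\, k}{(n+1-k)!\,(n+1)^{k+1}} = \frac{k\, n!}{(n+1)^{k}\,(n+1-k)!},
\]
as claimed.

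The main ``obstacle'' — really just a sanity check worth including — is verifying that these probabilities sum to $1$ over $k \in [n]$; this is automatic given that $\sum_{k=1}^n |\PF_n^{(k)}| = |\PF_n| = (n+1)^{n-1}$ (each parking function has a well-defined positive number of cyclic points between $1$ and $n$, since every digraph of a mapping has at least one cyclic point), but if one wants a self-contained identity it amounts to checking $\sum_{k=1}^{n-1}\binom{n+1}{k} k\, k!\,(n+1)^{n-k-2} + n! = (n+1)^{n-1}$, which follows from Pollak's formula rather than needing independent proof. I would simply remark that the normalization is inherited from Theorem~\ref{classical-cyclic} and $|\PF_n| = (n+1)^{n-1}$, and conclude the proof there.
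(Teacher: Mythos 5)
Your proposal is correct and is essentially identical to the paper's proof: both simply divide the counts from Theorem~\ref{classical-cyclic} by $|\PF_n| = (n+1)^{n-1}$ and simplify, and your algebra checks out. The normalization remark is a fine sanity check but not needed.
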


\begin{proof}
Let $|\PF_n^{(k)}|$ denote the number of parking functions of length $n$ with exactly $k$ cyclic points. Then $\PR(\lambda_n(\pi_n) = k) = \frac{|\PF_n^{(k)}|}{(n+1)^{n-1}}$. Our claim readily follows from Theorem \ref{classical-cyclic}.
\end{proof}

Our next result establishes a local limit theorem for $\lambda_n(\pi_n)$. In particular, it shows that the number of cyclic points, scaled by $\sqrt{n}$, converges in distribution to a Rayleigh random variable. 

\begin{proposition} \label{PFcyclicPDF}
Let $\lambda_n(\pi_n)$ be the number of cyclic points of a uniformly random parking function $\pi_n \in \PF_n$. Let $\Lambda_n = \frac{\lambda_n(\pi_n)}{\sqrt{n}}$ be the scaled number of cyclic points. If $k/\sqrt{n} \to x > 0$ as $n \to \infty$, then 
\begin{align}
    \PR(\lambda_n(\pi_n) = k) = \frac{1}{\sqrt{n}} x e^{-x^2/2}\left(1 + \frac{x(3-x^2)}{2\sqrt{n}} + O(n^{-1}) \right),
\end{align}
so that $\Lambda_n \xrightarrow{D} \Lambda$ as $n \to \infty$, where $\Lambda$ is distributed as a $\Rayleigh(1)$ random variable.
\end{proposition}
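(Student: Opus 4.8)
The plan is to start from the explicit probability mass function in Lemma~\ref{PFcyclicPMF} and rewrite it in a product form suited to asymptotic analysis. For $1\le k<n$, setting $m=n+1$, I would observe
\[
\PR(\lambda_n(\pi_n)=k)=\frac{k\,n!}{(n+1)^k(n+1-k)!}=\frac{k}{n+1}\cdot\frac{m!}{m^k(m-k)!}=\frac{k}{n+1}\prod_{j=1}^{k-1}\Bigl(1-\frac{j}{n+1}\Bigr).
\]
The local limit statement then follows by taking logarithms of the product. Since $k/\sqrt n\to x$ forces $k=o(n)$, each factor $j/(n+1)$ with $1\le j\le k-1$ is $o(1)$ uniformly, so $\log(1-j/(n+1))=-j/(n+1)+O(j^2/n^2)$. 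Summing, $\sum_{j=1}^{k-1}\frac{j}{n+1}=\frac{k(k-1)}{2(n+1)}\to\frac{x^2}{2}$ and $\sum_{j=1}^{k-1}O(j^2/n^2)=O(k^3/n^2)=O(n^{-1/2})\to0$, whence $\prod_{j=1}^{k-1}(1-j/(n+1))\to e^{-x^2/2}$. Combining this with $\frac{k}{n+1}\sim\frac{x}{\sqrt n}$ yields $\PR(\lambda_n(\pi_n)=k)\sim\frac{1}{\sqrt n}\,x e^{-x^2/2}$, as claimed.

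For the distributional convergence I would sidestep a Riemann-sum argument in favor of a telescoping identity. Writing $P_k:=\prod_{j=1}^{k}(1-j/(n+1))$ with $P_0:=1$, one checks $\frac{k}{n+1}\prod_{j=1}^{k-1}(1-j/(n+1))=P_{k-1}-P_k$, so for any integer $K$ with $1\le K<n$,
\[
\PR\bigl(\lambda_n(\pi_n)\le K\bigr)=\sum_{k=1}^{K}(P_{k-1}-P_k)=1-P_K=1-\prod_{j=1}^{K}\Bigl(1-\frac{j}{n+1}\Bigr).
\]
Fixing $y>0$ and taking $K=K_n=\lfloor y\sqrt n\rfloor$ (which is $<n$ for all large $n$), the same logarithmic expansion gives $\prod_{j=1}^{K_n}(1-j/(n+1))\to e^{-y^2/2}$. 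Hence $\PR(\Lambda_n\le y)=\PR(\lambda_n(\pi_n)\le K_n)\to 1-e^{-y^2/2}$, which is precisely the cumulative distribution function of a $\Rayleigh(1)$ variable at $y$; as this holds at every $y>0$ (and trivially for $y\le0$), we conclude $\Lambda_n\xrightarrow{D}\Lambda$.

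I expect the only genuine care needed is in the asymptotics of $\prod_{j=1}^{k-1}(1-j/(n+1))$: one must confirm the error in the logarithmic expansion is controlled uniformly over $1\le j\le k-1$, which is exactly where the order-$\sqrt n$ size of $k$ (hence $k=o(n)$) is used. As a consistency check I would verify the telescoping globally: $\prod_{j=1}^{n-1}(1-j/(n+1))=n!/(n+1)^{n-1}$, so $1-\prod_{j=1}^{n-1}(1-j/(n+1))$ together with the $k=n$ atom $\frac{n!}{(n+1)^{n-1}}$ from Lemma~\ref{PFcyclicPMF} sums to $1$, confirming nothing is lost.
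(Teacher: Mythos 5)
Your proposal is correct, and it takes a genuinely different route from the paper's. The paper proves the local estimate by applying Stirling's formula to $n!$ and $(n+1-k)!$ in the expression from Lemma~\ref{PFcyclicPMF} and then expanding the resulting exponent, and it deduces convergence in distribution by summing the local estimates over $k\in[a\sqrt n,b\sqrt n]$ as a Riemann sum for $\int_a^b xe^{-x^2/2}\,dx$. You instead rewrite the mass function as the falling-factorial product $\frac{k}{n+1}\prod_{j=1}^{k-1}\bigl(1-\frac{j}{n+1}\bigr)$ and expand its logarithm, which is the standard birthday-problem computation; the identity and the error control $O(k^3/n^2)=O(n^{-1/2})$ are both right, and your observation that $k/\sqrt n\to x$ forces the factors to be uniformly $o(1)$ is exactly the point that makes the expansion legitimate. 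Your telescoping step $\PR(\lambda_n\le K)=1-\prod_{j=1}^{K}\bigl(1-\frac{j}{n+1}\bigr)$ is also correct (one checks $P_{k-1}-P_k=P_{k-1}\cdot\frac{k}{n+1}$, which is the mass at $k$), and it buys something real: an exact closed form for the distribution function, so that weak convergence follows from a single pointwise limit rather than from a Riemann-sum argument, which in the paper's version tacitly requires uniformity of the local estimate over the range of summation. Your global consistency check (the product at $K=n-1$ plus the atom $n!/(n+1)^{n-1}$ at $k=n$ summing to $1$) is a nice confirmation that the telescoping accounts for all the mass. In short: same starting point (Lemma~\ref{PFcyclicPMF}), but a more elementary and arguably tighter derivation of both conclusions.
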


\begin{proof}
Let $k = x\sqrt{n}$ with $x > 0$, and note that asymptotically, $k < n$. By Lemma \ref{PFcyclicPMF} and Stirling approximation, 
\begin{align}
\begin{split}
&\PR(\lambda_n(\pi_n) = k) = \frac{x\sqrt{n} n!}{(n+1)^{x\sqrt{n}} (n+1-x\sqrt{n})!} \\
&= \frac{x\sqrt{n} \sqrt{2\pi} n^{n+\frac{1}{2}} e^{-n}}{(n+1)^{x\sqrt{n}} \sqrt{2\pi} (n+1-x\sqrt{n})^{n+\frac{3}{2}-x\sqrt{n}} e^{-(n+1-x\sqrt{n})}} (1 + O(n^{-1})) \\
&= x \exp\left(1-x\sqrt{n}+(n+1)\log n-(n+\frac{3}{2}-x\sqrt{n})\log(n+1-x\sqrt{n})-x\sqrt{n}\log(n+1)\right) (1 + O(n^{-1})) \\
&= x \exp\left(1-x\sqrt{n}+(n+1)\log n-(n+\frac{3}{2}-x\sqrt{n})\left[ \log n + \frac{1}{n} - \frac{x}{\sqrt{n}} - \frac{x^2}{2n} + O(n^{-3/2}) \right] \right. \\
& \qquad \qquad \left. -x\sqrt{n}\left[ \log n + \frac{1}{n} + O(n^{-2}) \right]\right) (1 + O(n^{-1})) \\
&= \frac{1}{\sqrt{n}} x e^{-x^2/2}\left(1 + \frac{x(3-x^2)}{2\sqrt{n}} + O(n^{-1}) \right).
\end{split}
\end{align}
We see that $\sqrt{n} P(\lambda_n(\pi_n) = k) \to x e^{-x^2/2}$ pointwise as $n \to \infty$. Indeed, this pointwise convergence as $n \to \infty$ is uniform with respect to $k$ in any interval of the form $0<a\leq k/\sqrt{n}\leq b<\infty$. Since $x e^{-x^2/2}$ is the density function of a $\Rayleigh(1)$ random variable, the claimed convergence in distribution follows from Scheff\'{e}'s lemma.
\end{proof}

Using this, we can compute the asymptotic mean number of cyclic points. 

\begin{lemma} \label{PFcyclicmean}
Let $\lambda_n(\pi_n)$ be the number of cyclic points of a uniformly random parking function $\pi_n \in \PF_n$. Then 
\begin{align}
\E(\lambda_n(\pi_n)) \sim \sqrt{\frac{\pi n}{2}}. 
\end{align}
\end{lemma}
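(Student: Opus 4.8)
The plan is to promote the convergence in distribution $\Lambda_n \xrightarrow{D}\Lambda\sim\Rayleigh(1)$ of Proposition \ref{PFcyclicPDF} to convergence of first moments. Since the mean of a $\Rayleigh(1)$ variable is $\E(\Lambda)=\int_0^\infty x\cdot xe^{-x^2/2}\,dx=\int_0^\infty x^2 e^{-x^2/2}\,dx=\sqrt{\pi/2}$ and $\E(\lambda_n(\pi_n))=\sqrt n\,\E(\Lambda_n)$, it suffices to show $\E(\Lambda_n)\to\E(\Lambda)$, for which it is enough that the family $\{\Lambda_n\}_{n\ge1}$ be uniformly integrable.

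First I would recast the mass function of Lemma \ref{PFcyclicPMF} in product form. Telescoping $n!/(n+1-k)! = \prod_{j=0}^{k-2}(n-j)$ gives, for $1\le k<n$,
\[
\PR(\lambda_n(\pi_n)=k)=\frac{k}{n+1}\prod_{i=1}^{k-1}\Bigl(1-\frac{i}{n+1}\Bigr).
\]
Using $1-t\le e^{-t}$ then yields the sub-Gaussian tail estimate $\PR(\lambda_n(\pi_n)=k)\le \frac{k}{n}\exp\!\bigl(-\tfrac{k(k-1)}{2(n+1)}\bigr)\le\frac{k}{n}\exp\!\bigl(-\tfrac{k^2}{4(n+1)}\bigr)$ valid for all $2\le k<n$, while the single boundary value $\PR(\lambda_n(\pi_n)=n)=n!/(n+1)^{n-1}$ is doubly exponentially small by Stirling.

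Next I would deduce $\sup_{n}\E(\Lambda_n^2)<\infty$. Writing $\E(\Lambda_n^2)=\frac1n\sum_{k=1}^n k^2\PR(\lambda_n(\pi_n)=k)$, the contribution of the terms $2\le k<n$ is at most
\[
\frac{1}{n^2}\sum_{k\ge1}k^3\exp\!\Bigl(-\frac{k^2}{4(n+1)}\Bigr)=O\!\Bigl(\frac{1}{n^2}\int_0^\infty t^3 e^{-t^2/(4(n+1))}\,dt\Bigr)=O(1)
\]
uniformly in $n$, since the integral equals $8(n+1)^2$ (and the summand is unimodal, so the sum exceeds the integral by at most its maximum, which is $O(n^{3/2})$); the terms $k=1$ and $k=n$ contribute $o(1)$. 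A family bounded in $L^2$ is uniformly integrable, so $\{\Lambda_n\}$ is uniformly integrable, and together with $\Lambda_n\xrightarrow{D}\Lambda$ this gives $\E(\Lambda_n)\to\E(\Lambda)=\sqrt{\pi/2}$. Multiplying by $\sqrt n$ yields $\E(\lambda_n(\pi_n))\sim\sqrt{\pi n/2}$.

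The crux is the uniform integrability step, i.e., producing a tail bound on $\PR(\lambda_n(\pi_n)=k)$ that is uniform over the whole range $1\le k\le n$ — the pointwise local limit estimate of Proposition \ref{PFcyclicPDF} only controls $k$ of order $\sqrt n$, and one must rule out mass escaping through larger values of $k$. The product representation combined with $1-t\le e^{-t}$ does this cleanly, modulo treating the extreme term $k=n$ on its own. A more computational alternative would bypass uniform integrability and evaluate $\E(\lambda_n(\pi_n))=\sqrt n\sum_k(k/\sqrt n)\,\PR(\lambda_n(\pi_n)=k)$ directly as a Riemann sum converging to $\int_0^\infty x^2 e^{-x^2/2}\,dx$, splitting the sum at $k=M\sqrt n$ and using the same tail bound to make the remainder uniformly small in $n$ before letting $M\to\infty$; this requires the identical estimate but with extra $\varepsilon$-bookkeeping.
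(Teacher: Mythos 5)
Your proposal is correct and follows the same route as the paper: identify the limit of $\E(\Lambda_n)$ with $\E(\Lambda)=\int_0^\infty x^2e^{-x^2/2}\,dx=\sqrt{\pi/2}$ for $\Lambda\sim\Rayleigh(1)$ and multiply by $\sqrt n$. The only difference is that the paper's proof is a one-line assertion that Proposition \ref{PFcyclicPDF} yields convergence of the first moment, whereas you supply the missing uniform-integrability step via the product form of the mass function and the bound $1-t\le e^{-t}$; that tail estimate is correct (the lone inaccuracy, which is harmless, is calling $n!/(n+1)^{n-1}$ ``doubly exponentially small'' when it is only of order $n^{3/2}e^{-n}$), so your write-up is in fact more complete than the paper's.
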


\begin{proof}
From the explicit error bound in Proposition \ref{PFcyclicPDF}, if $n \to \infty$, then uniformly with respect to integers $k$ such that $x = \frac{k}{\sqrt{n}}$ lies in any interval of the form $0 < x \leq C < \infty$, where $C$ is some constant, we have that $\sqrt{n}P\left( \frac{\lambda_n(\pi_n)}{\sqrt{n}} = x\right) = xe^{-x^2/2}(1 + o(1))$, where $o(1) \to 0$ as $n \to \infty$. Fix $0 < M < \infty$. By truncation, we can write the limit of the expectation as 
\begin{align}
\begin{split} \label{cyclicmeantrunc}
   \lim_{n \to \infty} \E\left( \frac{\lambda_n(\pi_n)}{\sqrt{n}} \right) &= \lim_{n \to \infty} \E\left( \frac{\lambda_n(\pi_n)}{\sqrt{n}} \I_{\left\{ \frac{\lambda_n(\pi_n)}{\sqrt{n}} \leq M \right\}} \right) + \lim_{n \to \infty} \E\left( \frac{\lambda_n(\pi_n)}{\sqrt{n}} \I_{\left\{ \frac{\lambda_n(\pi_n)}{\sqrt{n}} > M \right\}} \right).
\end{split}
\end{align}
The second term of (\ref{cyclicmeantrunc}) is uniformly bounded in $n$ for $k$ in the interval $(M\sqrt{n}, n]$ and converges to $0$ as $M \to \infty$. 

Now let $x_k := \frac{k}{\sqrt{n}}$. By Riemann sum approximation and applying the asymptotic form of Proposition \ref{PFcyclicPDF}, the first term of (\ref{cyclicmeantrunc}) converges as
\begin{align}
\begin{split}
\lim_{n \to \infty} \E\left( \frac{\lambda_n(\pi_n)}{\sqrt{n}} \I_{\left\{ \frac{\lambda_n(\pi_n)}{\sqrt{n}} \leq M \right\}} \right) &= \lim_{n \to \infty} \sum_{1 \leq k \leq M\sqrt{n}} \frac{k}{\sqrt{n}} P(\lambda_n(\pi_n) = k) \\
&= \lim_{n \to \infty} \sum_{1 \leq k \leq M\sqrt{n}} x_k \cdot x_k e^{-x_k^2/2}(1 + o(1)) \frac{1}{\sqrt{n}} \\
&= \int_0^M x \cdot xe^{-x^2/2} \, dx.
\end{split}
\end{align}
Therefore we have that 
\begin{align}
    \lim_{n \to \infty} \E\left( \frac{\lambda_n(\pi_n)}{\sqrt{n}} \right) = \int_0^M x \cdot xe^{-x^2/2} \, dx. 
\end{align}
Since $M > 0$ was arbitrary, sending $M \to \infty$ and noting that $\int_0^\infty x^2 e^{-x^2/2} \, dx = \sqrt{\frac{\pi}{2}}$ completes the proof. 
\end{proof}

Since we will need it later, we also record the following result. The proof follows by a similar argument as that of Lemma \ref{PFcyclicmean}. 

\begin{lemma} \label{PFcyclicmomentconvergence}
Let $\Lambda_n = \frac{\lambda_n(\pi_n)}{\sqrt{n}}$ be the scaled number of cyclic points of a uniformly random parking function $\pi_n \in \PF_n$. Then for any $r \in (-2, \infty)$,
\begin{align}
\lim_{n \to \infty} \E(\Lambda_n^r) = \E(\Lambda^r) = 2^{r/2} \Gamma\left(1 + \frac{r}{2}\right). 
\end{align}
\end{lemma}

\begin{proof}
This follows from Proposition \ref{PFcyclicPDF}, truncation, and standard facts about moments of the Rayleigh distribution.
\end{proof}

\section{Trio of limit theorems for the number of cycles}
\label{sec:trio}

In this section, we establish the classical trio of limit theorems (law of large numbers, central limit theorem, large deviation principle) for the number of cycles in a random parking function. We present our main results first.

\begin{theorem}[Law of large numbers] \label{PFcyclesLLN}
Let $K_n(\pi_n)$ be the number of cycles of a uniformly random parking function $\pi_n \in \PF_n$. Then
\begin{align}
    \frac{K_n(\pi_n)}{\frac{1}{2}\log n} \xrightarrow{\PR} 1
\end{align}
as $n \to \infty$.
\end{theorem}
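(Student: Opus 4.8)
The plan is to condition on the number of cyclic points $\lambda_n(\pi_n)$ — whose scaled behaviour is already pinned down by Proposition~\ref{PFcyclicPDF} — reduce the cyclic structure, conditionally, to that of a uniform random permutation, and then apply Goncharov's estimates (Proposition~\ref{GoncharovCLT}). The first step is the structural one: I would show that, conditionally on $\{\lambda_n(\pi_n) = k\}$, $K_n(\pi_n)$ is distributed as $K_k(\sigma_k)$, the number of cycles of a uniform $\sigma_k \in S_k$. Encode each $\pi \in \PF_n$ by the triple $(C,\sigma,F)$, where $C$ is the set of cyclic points, $\sigma = \pi|_C \in S_C$ is the induced permutation on $C$, and $F$ is the rooted forest on $[n]$ with root set $C$ whose parent map is $\pi|_{[n]\setminus C}$; this is a bijection. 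The key point is that membership in $\PF_n$ depends only on $(C,F)$, not on $\sigma$: using the characterization $\pi\in\PF_n\iff |\{j:\pi(j)\le i\}|\ge i$ for all $i\in[n]$, and the fact that $\sigma$ is a bijection of $C$ onto $C$, one has $|\{j\in C:\sigma(j)\le i\}|=|C\cap[i]|$ for every $\sigma\in S_C$, while $\{j\notin C:\pi(j)\le i\}$ does not involve $\sigma$; hence $|\{j:\pi(j)\le i\}|$ is the same for all $\sigma\in S_C$. This is precisely the ``permuting rooted trees'' invariance stressed in the introduction. Since $|\PF_n^{(k)}|>0$ for every $1\le k\le n$ by Theorem~\ref{classical-cyclic}, each forest datum $(C,F)$ that occurs is realized by exactly $k!$ parking functions, one per $\sigma\in S_C$; so conditionally on $(C,F)$ the permutation $\sigma$ is uniform on $S_C$, and therefore conditionally on $\lambda_n=k$ the cycle type of $\sigma$, and hence $K_n(\pi_n)$, is that of a uniform permutation of $[k]$.

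For the assembly, by Proposition~\ref{GoncharovCLT}, $\E K_k(\sigma_k)=H_k=\log k+O(1)$ and $\Var K_k(\sigma_k)\sim\log k$, so Chebyshev gives $\PR(|K_k(\sigma_k)/\log k-1|>\eta)\to 0$ as $k\to\infty$, uniformly for $k$ large. Fix $\epsilon>0$ and $\delta\in(0,1)$; conditioning on $\lambda_n$,
\[
\PR\!\left(\Big|\tfrac{K_n(\pi_n)}{\frac12\log n}-1\Big|>\epsilon\right)\le \PR\big(\Lambda_n\notin[\delta,\delta^{-1}]\big)+\!\!\sum_{\delta\sqrt n\le k\le \delta^{-1}\sqrt n}\!\!\PR(\lambda_n=k)\,\PR\!\left(\Big|\tfrac{K_k(\sigma_k)}{\frac12\log n}-1\Big|>\epsilon\right).
\]
For $k$ in that range $\log k=\tfrac12\log n+O_\delta(1)$, so $\tfrac12\log n/\log k\to 1$ uniformly and, for $n$ large, the last probability is $\le\PR(|K_k(\sigma_k)/\log k-1|>\epsilon/2)$, which tends to $0$ uniformly over $k\ge\delta\sqrt n$; hence the sum vanishes as $n\to\infty$. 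Using Proposition~\ref{PFcyclicPDF}, $\limsup_n\PR(\Lambda_n\notin[\delta,\delta^{-1}])=\PR(\Lambda\notin[\delta,\delta^{-1}])$, which decreases to $0$ as $\delta\downarrow 0$ since $\Lambda\sim\Rayleigh(1)$ is a.s.\ positive and finite. Combining yields $K_n(\pi_n)/(\tfrac12\log n)\xrightarrow{\PR}1$.

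The only step of real substance is identifying the conditional law of $K_n$ given $\lambda_n$; pleasantly, this reduces to the elementary observation above once the parking condition is written in the counting form, and the remainder is a routine conditioning-plus-Chebyshev argument feeding on Propositions~\ref{GoncharovCLT} and~\ref{PFcyclicPDF}. As an alternative to the second paragraph, one could instead compute $\E K_n(\pi_n)=\E H_{\lambda_n}\sim\tfrac12\log n$ and $\Var K_n(\pi_n)\sim\tfrac12\log n$ directly (via the conditional decomposition of Step~1 together with moment control as in Lemma~\ref{PFcyclicmomentconvergence}) and apply Chebyshev once.
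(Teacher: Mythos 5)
Your proof is correct, and while it rests on the same central idea as the paper---conditioning on the number of cyclic points $\lambda_n(\pi_n)$ so that the cycle structure reduces to that of a uniform permutation of size $\lambda_n$---the way you assemble the limit is genuinely different. The paper first computes the unconditional asymptotics $\E(K_n(\pi_n))=\tfrac12(\log(2n)+\gamma)+o(1)$ and $\Var(K_n(\pi_n))=\tfrac12(\log(2n)+\gamma)-\tfrac{\pi^2}{8}+o(1)$ (Lemmas \ref{PFcyclesmean} and \ref{PFcyclesvar}), via the tower property together with explicit Rayleigh integrals for $\E(\log\Lambda)$ and $\Var(\log\Lambda)$, and then applies Chebyshev once; this is exactly the ``alternative'' you mention at the end. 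You instead truncate to the event $\Lambda_n\in[\delta,\delta^{-1}]$, note that $\log k=\tfrac12\log n+O_\delta(1)$ uniformly on that range, and apply Goncharov's mean/variance bounds conditionally, controlling the tail by Proposition \ref{PFcyclicPDF}. Your route is more economical and more robust: it needs only tightness of $\Lambda_n$ away from $0$ and $\infty$, and it sidesteps a uniform-integrability point that the paper leaves implicit when it passes from $\Lambda_n\xrightarrow{D}\Lambda$ to $\E\bigl(\log\Lambda_n\bigr)\to\E(\log\Lambda)$ (the function $\log$ is unbounded near $0$). What it does not give you is the sharp constant in the mean and variance, which the paper reuses for the central limit theorem. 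A further merit of your write-up is that you actually prove the conditional identity $(K_n\mid\lambda_n=k)\stackrel{D}{=}K_k(\sigma_k)$ via the $(C,\sigma,F)$ encoding and the observation that the parking condition $|\{j:\pi(j)\le i\}|\ge i$ depends only on $(C,F)$; the paper asserts this reduction without argument, so your first paragraph fills a genuine (if small) gap in the exposition.
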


\begin{theorem}[Central limit theorem] \label{PFcyclesCLT}
Let $K_n(\pi_n)$ be the number of cycles of a uniformly random parking function $\pi_n \in \PF_n$. Then
\begin{align}
\frac{K_n(\pi_n) - \frac{1}{2}\log n}{ \sqrt{\frac{1}{2} \log n} } \xrightarrow{D} \mathcal{N}(0,1),
\end{align}
as $n \to \infty$, where $\mathcal{N}(0,1)$ is a standard normal random variable. 
\end{theorem}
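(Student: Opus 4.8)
The plan is to condition on the number of cyclic points and reduce to Goncharov's central limit theorem for permutations (Proposition~\ref{GoncharovCLT}). The key structural input, which I would extract from the proof of Theorem~\ref{classical-cyclic}, is: conditioned on $\{\lambda_n(\pi_n) = k\}$, the number of cycles $K_n(\pi_n)$ has the same law as $K_k(\sigma_k)$, the number of cycles of a uniformly random permutation $\sigma_k \in S_k$. Indeed, that proof shows that the number of parking functions of length $n$ whose cyclic points carry a permutation of cycle type $(1^{k_1}, 2^{k_2}, \dots)$, with $\sum_i i\,k_i = k$, equals a quantity depending only on $n$ and $k$ times $\prod_i (i^{k_i} k_i!)^{-1}$; since $k!\prod_i (i^{k_i} k_i!)^{-1}$ is precisely the number of permutations in $S_k$ of that cycle type, the conditional cycle type---hence the conditional value of $K_n(\pi_n)$---is distributed exactly as for a uniform element of $S_k$. (The case $k = n$ is immediate, as $\pi_n$ is then a permutation.) Equivalently, $\E[z^{K_n(\pi_n)}] = \sum_{k=1}^n \PR(\lambda_n(\pi_n) = k)\, z(z+1)\cdots(z+k-1)/k!$.

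Next I would run a random-index (Anscombe-type) argument. Set $W_n = (K_n(\pi_n) - \log \lambda_n(\pi_n))/\sqrt{\log \lambda_n(\pi_n)}$, defined arbitrarily on the event $\{\lambda_n(\pi_n) = 1\}$, which has probability $1/(n+1) \to 0$ by Lemma~\ref{PFcyclicPMF}. Conditioning on $\lambda_n(\pi_n) = k$ gives $W_n \stackrel{d}{=} (K_k(\sigma_k) - \log k)/\sqrt{\log k}$, which by Proposition~\ref{GoncharovCLT} converges in distribution to $\mathcal{N}(0,1)$ as $k \to \infty$. Since Proposition~\ref{PFcyclicPDF} (or Lemma~\ref{PFcyclicPMF} directly) gives $\lambda_n(\pi_n) \xrightarrow{\PR} \infty$, a routine bounded-test-function computation finishes this step: for bounded continuous $f$, $\E[f(W_n)] = \E[h(\lambda_n(\pi_n))]$ where $h(k) := \E[f((K_k(\sigma_k) - \log k)/\sqrt{\log k})] \to \E[f(\mathcal{N}(0,1))]$, so splitting the sum at a large threshold $M$ and using $\PR(\lambda_n(\pi_n) \le M) \to 0$ (which also absorbs the $k=1$ term) yields $\E[f(W_n)] \to \E[f(\mathcal{N}(0,1))]$, i.e.\ $W_n \xrightarrow{D} \mathcal{N}(0,1)$.

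Finally I would pass from the random centering $\log \lambda_n(\pi_n)$ to the deterministic one $\tfrac12 \log n$ via Slutsky's theorem. Writing $\Lambda_n = \lambda_n(\pi_n)/\sqrt n$, Proposition~\ref{PFcyclicPDF} gives $\Lambda_n \xrightarrow{D} \Lambda \sim \Rayleigh(1)$, a distribution supported on $(0,\infty)$ with no atom at $0$ or $\infty$; hence $\{\log \Lambda_n\}$ is tight, i.e.\ $\log \Lambda_n = O_{\PR}(1)$. Since $\log \lambda_n(\pi_n) = \tfrac12 \log n + \log \Lambda_n$, we get $\log \Lambda_n / \sqrt{\tfrac12 \log n} \xrightarrow{\PR} 0$ and $\sqrt{\log \lambda_n(\pi_n)/(\tfrac12 \log n)} = \sqrt{1 + \log \Lambda_n/(\tfrac12 \log n)} \xrightarrow{\PR} 1$. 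Combining these with $W_n \xrightarrow{D} \mathcal{N}(0,1)$ in the identity
\[
\frac{K_n(\pi_n) - \frac12 \log n}{\sqrt{\frac12 \log n}} = W_n \sqrt{\frac{\log \lambda_n(\pi_n)}{\frac12 \log n}} + \frac{\log \Lambda_n}{\sqrt{\frac12 \log n}},
\]
Slutsky's theorem gives the claimed convergence to $\mathcal{N}(0,1)$.

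The main obstacle is the first step. Because parking functions are not exchangeable, it is not a priori clear that conditioning on the number of cyclic points produces a uniformly random permutation on those points; making this rigorous relies on the enumerative identity underlying Theorem~\ref{classical-cyclic}, namely that the cycle-type-refined count of parking functions of length $n$ with $k$ cyclic points factors through $\prod_i (i^{k_i} k_i!)^{-1}$. Once that reduction is in place, the remaining steps are standard: Goncharov's theorem, an Anscombe-type random-index CLT, and Slutsky's theorem, all steered by the Rayleigh limit for $\lambda_n(\pi_n)/\sqrt n$ from Proposition~\ref{PFcyclicPDF}.
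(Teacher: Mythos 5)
Your proposal is correct and follows essentially the same route as the paper: condition on the number of cyclic points to reduce to Goncharov's CLT for a uniform permutation on the cyclic points, then use the identical decomposition with the random centering $\log\lambda_n(\pi_n)$ and Slutsky's theorem together with the Rayleigh limit of $\lambda_n(\pi_n)/\sqrt{n}$. If anything, your write-up is slightly more careful than the paper's at two points --- you justify the conditional identification with a uniform permutation via the cycle-type-refined count from Theorem~\ref{classical-cyclic}, and you replace the paper's informal appeal to ``$\lambda_n \to \infty$ almost surely'' with an explicit Anscombe-type threshold argument using $\lambda_n(\pi_n) \xrightarrow{\PR} \infty$ --- but these are refinements of the same proof, not a different one.
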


\begin{theorem}[Large deviation principle] \label{PFcyclesLDP}
Let $K_n(\pi_n)$ be the number of cycles of a uniformly random parking function $\pi_n \in \PF_n$. Then the family $\left\{ \frac{K_n(\pi_n)}{\log n}\right\}_{n \geq 1}$ satisfies a large deviation principle with speed $\log n$ and rate function
\begin{align}
I(x) = \begin{cases}
x\log\left( 2x \right) - x + \frac{1}{2} & x \geq 0, \\
\infty & x < 0. 
\end{cases}
\end{align}
\end{theorem}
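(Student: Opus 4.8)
The plan is to obtain the limiting scaled cumulant generating function of $K_n(\pi_n)$ and invoke the Gärtner–Ellis theorem. The crucial input is the conditional law of $K_n(\pi_n)$ given $\lambda_n(\pi_n)$: \emph{conditionally on $\{\lambda_n(\pi_n)=k\}$, the restriction of $\pi_n$ to its $k$ cyclic points is a uniformly random permutation of those points}, so that $K_n(\pi_n)$ is distributed as the number of cycles $K_k(\sigma_k)$ of a uniformly random permutation $\sigma_k\in S_k$. This is the quantitative form of the tree–permutation invariance emphasized in the introduction, and it can be extracted from a cycle-type refinement of the proof of Theorem \ref{classical-cyclic} (cf.\ \cite{RY25}): running the computation leading to (\ref{long-cyclic}) while keeping the cycle type $(k_1,\dots,k_n)$ of the cyclic part fixed (with $\sum_i ik_i=k$), the summand simplifies to $\binom{n+1}{k}\,k\,(n+1)^{n-k-2}\cdot\frac{k!}{\prod_i i^{k_i}k_i!}$, whose cycle-type dependence is exactly $\frac{k!}{\prod_i i^{k_i}k_i!}$, the number of permutations of $[k]$ of that type; summing over cycle types recovers $|\PF_n^{(k)}|$, and this product structure gives the claimed conditional law.

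Granting this, conditioning on $\lambda_n(\pi_n)$ and using the classical identity $\E[x^{K_k(\sigma_k)}]=\Gamma(x+k)/(\Gamma(k+1)\Gamma(x))$ with $x=e^\theta$ yields
\[
\E\!\left[e^{\theta K_n(\pi_n)}\right]=\frac{1}{\Gamma(e^\theta)}\sum_{k=1}^{n}\PR(\lambda_n(\pi_n)=k)\,\frac{\Gamma(k+e^\theta)}{\Gamma(k+1)}.
\]
Because $e^\theta>0$, the ratio $\Gamma(k+e^\theta)/\Gamma(k+1)$ lies between constant multiples of $k^{e^\theta-1}$ uniformly over $k\ge 1$, so $\frac{1}{\log n}\log\E[e^{\theta K_n(\pi_n)}]=\frac{1}{\log n}\log\E[\lambda_n(\pi_n)^{\,e^\theta-1}]+o(1)$, and it remains to show $\frac{1}{\log n}\log\E[\lambda_n(\pi_n)^{\,s}]\to \frac{s}{2}$ for every real $s>-1$ (note $e^\theta-1\in(-1,\infty)$ for all $\theta\in\R$). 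The lower bound is immediate from Proposition \ref{PFcyclicPDF} by restricting to $\{a\sqrt n\le\lambda_n(\pi_n)\le b\sqrt n\}$. For the upper bound I would use Lemma \ref{PFcyclicPMF} in the form $\PR(\lambda_n(\pi_n)=k)=\frac{k}{n+1}\prod_{j=1}^{k-1}\bigl(1-\tfrac{j}{n+1}\bigr)\le\frac{k}{n+1}\exp\!\bigl(-\tfrac{(k-1)^2}{2(n+1)}\bigr)$ for $1\le k<n$ (the $k=n$ term being exponentially small); splitting the sum $\sum_k k^{s}\PR(\lambda_n(\pi_n)=k)$ at $k\asymp\sqrt n$ and using this Gaussian tail bounds it by $O(n^{s/2})$, the small-$k$ part for $-1<s<0$ being controlled by the same estimate since $s+1>0$. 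Hence $\Lambda(\theta):=\lim_n\frac{1}{\log n}\log\E[e^{\theta K_n(\pi_n)}]=\tfrac12(e^\theta-1)$ for all $\theta\in\R$.

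Finally, $\Lambda$ is finite and real-analytic on all of $\R$, hence lower semicontinuous and essentially smooth, so the Gärtner–Ellis theorem gives the large deviation principle for $\{K_n(\pi_n)/\log n\}$ with speed $\log n$ and good rate function $I=\Lambda^*$. Computing $\Lambda^*(x)=\sup_{\theta\in\R}\{\theta x-\tfrac12(e^\theta-1)\}$: for $x>0$ the supremum is attained at $\theta=\log(2x)$ and equals $x\log(2x)-x+\tfrac12$; for $x=0$ it equals $\tfrac12$ (attained as $\theta\to-\infty$), agreeing with the previous expression in the limit $x\downarrow 0$; and for $x<0$ it is $+\infty$. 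This is exactly the claimed $I(x)$.

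The main obstacle is the first step — identifying the conditional law of the cyclic part — since that is the only place the special combinatorics of parking functions (rather than generic mappings) enters; once it is available, the rest is the same Gärtner–Ellis computation one would run for random mappings, the only genuinely new technical point being the non-integer (and possibly negative) moment estimate for $\lambda_n(\pi_n)$, which is not covered by Lemma \ref{PFcyclicmomentconvergence} and must be read off from the explicit mass function. For intuition, the rate function also arises from a ``mixing of large deviations'': forcing $\lambda_n(\pi_n)\approx n^{c}$ costs $1-2c$ on the $\log n$ scale for $c\in[0,\tfrac12]$ and is impossible for $c>\tfrac12$, while conditionally the permutation rate function $J(y)=y\log y-y+1$ contributes $c\,J(x/c)$; minimizing $1-2c+c\,J(x/c)$ over $c$ gives $c=\tfrac12$ and recovers $I(x)=x\log(2x)-x+\tfrac12$.
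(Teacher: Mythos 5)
Your proposal follows essentially the same route as the paper: condition on the number of cyclic points to reduce to the permutation case, compute the moment generating function via $\prod_{k=1}^{\lambda_n}\frac{e^\theta+k-1}{k}=\frac{\Gamma(e^\theta+\lambda_n)}{\Gamma(e^\theta)\Gamma(1+\lambda_n)}$, show the limiting scaled cumulant generating function is $\tfrac12(e^\theta-1)$, and apply G\"artner--Ellis with the same Legendre transform computation. The one place you diverge is the estimate $\frac{1}{\log n}\log\E[\lambda_n^{\,e^\theta-1}]\to\tfrac12(e^\theta-1)$: the paper sandwiches the non-integer power between the floor and ceiling integer moments and invokes Lemma \ref{PFcyclicmomentconvergence}, dismissing the case $t<0$ (where $e^t-1\in(-1,0)$ and integer-moment bounds from that lemma do not apply) with a one-line remark, whereas you work directly from the explicit mass function of Lemma \ref{PFcyclicPMF} via the Gaussian tail bound $\PR(\lambda_n=k)\le\frac{k}{n+1}\exp\bigl(-\tfrac{(k-1)^2}{2(n+1)}\bigr)$, which cleanly covers all exponents $s>-1$ including the negative ones; this is a genuine (if small) improvement in rigor over the published argument. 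Your justification of the conditional law via the cycle-type refinement of Theorem \ref{classical-cyclic} is also more explicit than the paper's, which simply asserts $(K_n\mid\lambda_n)\stackrel{D}{=}K_{0,\lambda_n}$.
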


To prove these results, we start by computing the asymptotic mean and variance of the number of cycles in a uniformly random parking function. The key idea is to study $(K_n(\pi_n) \mid \lambda_n)$, the number of cycles in a uniformly random parking function conditioned to have $\lambda_n(\pi_n)$ cyclic points. 

\begin{lemma} \label{PFcyclesmean}
The mean number of cycles is
\begin{align}
\E(K_n(\pi_n)) = \frac{1}{2}(\log(2n) + \gamma) + o(1),
\end{align}
where $\gamma$ is the Euler-Mascheroni constant. 
\end{lemma}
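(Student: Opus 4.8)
The plan is to condition on the number of cyclic points $\lambda_n$ and exploit the key structural fact emphasized in the introduction: once we fix the set of $\lambda_n$ cyclic points together with how the hanging trees are distributed, the cyclic points carry a \emph{uniformly random permutation} structure. More precisely, I would argue that the conditional law of $K_n(\pi_n)$ given $\lambda_n = k$ is exactly the law of $K_k(\sigma_k)$, the number of cycles of a uniformly random permutation of $[k]$. The reason is that permuting the rooted trees attached to cyclic points always yields a parking function (and conversely any reshuffling of the cyclic labels among themselves, keeping the trees intact, is realized); the number of parking functions with a prescribed cyclic-point set $S$ of size $k$ and a prescribed forest attached to $S$ is the same for every permutation one places on $S$, since the parking condition only constrains the multiset of preferences, which is unchanged. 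Hence $(K_n(\pi_n)\mid \lambda_n=k)\overset{D}{=}K_k(\sigma_k)$, and by Proposition~\ref{GoncharovCLT},
\bal
\E(K_n(\pi_n)\mid \lambda_n = k) = H_k := \sum_{j=1}^k \frac{1}{j}.
\eal

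With this in hand the computation is a matter of averaging. By the tower property,
\bal
\E(K_n(\pi_n)) = \E\big(H_{\lambda_n}\big) = \sum_{k=1}^n H_k \,\PR(\lambda_n(\pi_n)=k),
\eal
and I would substitute the explicit mass function from Lemma~\ref{PFcyclicPMF}. Using $H_k = \log k + \gamma + O(1/k)$ and Proposition~\ref{PFcyclicPDF}, which tells us $\lambda_n/\sqrt{n}$ concentrates on the $\Rayleigh(1)$ scale, we get $\log \lambda_n = \tfrac12\log n + \log(\lambda_n/\sqrt n)$, so heuristically
\bal
\E(K_n(\pi_n)) \approx \tfrac12\log n + \gamma + \E\big(\log \Lambda\big),
\eal
where $\Lambda\sim\Rayleigh(1)$. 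The constant is then pinned down by the identity $\E(\log\Lambda) = \tfrac12(\log 2 - \gamma)$ (a standard evaluation: if $\Lambda^2/2$ is $\mathrm{Exponential}(1)$ then $\E\log(\Lambda^2/2) = -\gamma$, whence $\E\log\Lambda = \tfrac12(-\gamma) + \tfrac12\log 2$). This gives $\tfrac12\log n + \gamma + \tfrac12\log 2 - \tfrac12\gamma = \tfrac12(\log(2n)+\gamma)$, matching the claim.

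To make the heuristic rigorous I would split the sum $\sum_k H_k\,\PR(\lambda_n=k)$ at, say, $k = n^{1/4}$ and $k = n^{3/4}$: the small-$k$ and large-$k$ (near $n$) tails contribute $o(1)$ because $H_k = O(\log n)$ uniformly while $\PR(\lambda_n \le n^{1/4})$ and $\PR(\lambda_n \ge n^{3/4})$ are super-polynomially small by the sharp asymptotics in Proposition~\ref{PFcyclicPDF} (indeed the mass function $\frac{k\,n!}{(n+1)^k(n+1-k)!}$ decays like $e^{-k^2/2n}$ in the bulk and even faster in the right tail); in the bulk I write $H_k = \tfrac12\log n + \log(k/\sqrt n) + \gamma + O(k^{-1})$ and recognize $\sum_k \log(k/\sqrt n)\PR(\lambda_n=k) \to \E(\log\Lambda)$ by the convergence in distribution together with uniform integrability of $\log(\Lambda_n)$ on the bulk (the latter is controlled by the Gaussian-type tail bound on the mass function, or alternatively one can quote Lemma~\ref{PFcyclicmomentconvergence} to dominate $|\log\Lambda_n|$). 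The main obstacle is precisely this interchange of limit and expectation: one must verify that no mass escapes to $k=O(1)$ (where $\log(k/\sqrt n)\to-\infty$) or to $k$ near $n$ at a rate that survives multiplication by $H_k$. The decay estimate $\PR(\lambda_n = k)\lesssim \tfrac{k}{\sqrt n}e^{-k^2/2n}$ (valid uniformly for $1\le k \le n$, obtainable by the same Stirling computation as in the proof of Proposition~\ref{PFcyclicPDF} but tracking inequalities rather than asymptotics) handles both tails and supplies the needed domination, after which the result follows by dominated convergence and the explicit value of $\E(\log\Lambda)$.
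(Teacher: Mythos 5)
Your proposal is correct and follows essentially the same route as the paper: condition on the number of cyclic points, use Goncharov's formula $\E(K_n\mid\lambda_n=k)=H_k=\log k+\gamma+O(1/k)$, and combine the Rayleigh limit with the identity $\E(\log\Lambda)=\tfrac12(\log 2-\gamma)$ to pin down the constant. Your extra care with the tail/uniform-integrability step (justifying $\E\log\Lambda_n\to\E\log\Lambda$) is a welcome addition, since the paper passes over that interchange without comment.
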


\begin{remark}\label{rK: gamma}
The Euler-Mascheroni constant $\gamma$ may be defined in various ways. The proof of Lemma \ref{PFcyclesmean} will utilize two of these equivalent definitions, which we present below.

\noindent (1) $\gamma$ is the limiting difference between the harmonic series and the natural logarithm:
\begin{align}
\gamma=\lim_{n \rightarrow \infty} \left(-\log n + \sum_{k=1}^n \frac{1}{k}\right).
\end{align}

\noindent (2) Recall that the Gamma function is defined as
\begin{align}
\Gamma(z)=\int_0^\infty x^{z-1} e^{-x} dx,
\end{align}
with derivatives given by
\begin{align}
\Gamma'(z)=\int_0^\infty x^{z-1} e^{-x} \log x dx \text{ and } \Gamma''(z)=\int_0^\infty x^{z-1} e^{-x} \log^2 x dx.
\end{align}
We have $\Gamma'(1)=-\gamma$ and $\Gamma''(1)=\gamma^2+\pi^2/6$.
\end{remark}


\begin{proof}[Proof of Lemma \ref{PFcyclesmean}]
Observe that the $\lambda_n(\pi_n)$ cyclic points of $\pi_n$ form a random permutation of length $\lambda_n(\pi_n)$, cf. Remark \ref{rk:exp_cyclic}. Using Proposition \ref{GoncharovCLT} for the number of cycles in a random permutation of size $n$, we have that the mean number of cycles is $\sum_{k=1}^n 1/k$, and thus $\log n + \gamma + o(1)$ by Remark \ref{rK: gamma}(1). As in the proof of Lemma \ref{PFcyclicmean}, we proceed by direct computation (which is again based on Proposition \ref{PFcyclicPDF}).
\begin{align}
\begin{split}
& \lim_{n \rightarrow \infty} \E\left(\log \left(\frac{\lambda_n(\pi_n)}{\sqrt{n}}\right)\right)=\E(\log \Lambda) \\
= & \int_0^\infty \log x \cdot x e^{-\frac{x^2}{2}} dx = \frac{1}{2} \int_0^\infty \log(2t) e^{-t} dt \\
= &\frac{1}{2} \left(\log 2 + \int_0^\infty \log t e^{-t} dt\right) =\frac{1}{2} \left(\log 2-\gamma \right),
\end{split}
\end{align}
where in the middle equality we applied a change of variables $x=\sqrt{2t}$ and in the last equality we applied Remark \ref{rK: gamma}(2). By the law of total expectation, conditioning on $\lambda_n(\pi_n)$, this further implies that
\begin{align}
\E(K_n(\pi_n)) &= \E[\E(K_n(\pi_n) \mid \lambda_n)] = \E\left[ \log(\lambda_n(\pi_n)) + \gamma + o(1) \right] \nonumber \\
= & \log(\sqrt{n}) + \frac{1}{2}(\log 2 - \gamma) + \gamma + o(1) \\
= & \frac{1}{2}(\log(2n) + \gamma) + o(1). \nonumber \qedhere
\end{align}
\end{proof}

The asymptotic variance of $K_n(\pi_n)$ may be computed by the same conditioning argument as in the proof of Lemma \ref{PFcyclesmean}, except that the computation is more involved.

\begin{lemma} \label{PFcyclesvar}
The variance of the number of cycles is
\begin{align}
\Var(K_n(\pi_n)) = \frac{1}{2}(\log(2n) + \gamma)-\frac{\pi^2}{8} + o(1),
\end{align}
where $\gamma$ is the Euler-Mascheroni constant. 
\end{lemma}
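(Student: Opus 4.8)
The plan is to follow exactly the conditioning strategy used for the mean in Lemma \ref{PFcyclesmean}, now via the law of total variance:
\[
\Var(K_n(\pi_n)) = \E\!\left[\Var\!\left(K_n(\pi_n)\mid\lambda_n\right)\right] + \Var\!\left(\E\!\left(K_n(\pi_n)\mid\lambda_n\right)\right),
\]
where $\lambda_n = \lambda_n(\pi_n)$. Conditioned on $\lambda_n(\pi_n)=\ell$, the cyclic points of $\pi_n$ carry a uniformly random permutation of $S_\ell$ (this is exactly the fact invoked in the proof of Lemma \ref{PFcyclesmean}, and one checks from the enumeration in Theorem \ref{classical-cyclic} that the conditional cycle-type distribution is indeed uniform over $S_\ell$). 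Hence Proposition \ref{GoncharovCLT} supplies the \emph{exact} identities $\E(K_n(\pi_n)\mid\lambda_n=\ell) = H_\ell := \sum_{k=1}^\ell 1/k$ and $\Var(K_n(\pi_n)\mid\lambda_n=\ell) = H_\ell - H^{(2)}_\ell$, with $H^{(2)}_\ell := \sum_{k=1}^\ell 1/k^2$.

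For the first term I would write $H_\ell = \log\ell + \gamma + O(1/\ell)$ and $H^{(2)}_\ell = \pi^2/6 + O(1/\ell)$. Since $\lambda_n/\sqrt n \xrightarrow{D}\Lambda$ with $\Lambda>0$ a.s. (Proposition \ref{PFcyclicPDF}), we have $\lambda_n \xrightarrow{\PR}\infty$, so bounded convergence (using $1\le H^{(2)}_{\lambda_n}\le\pi^2/6$ and the boundedness of the $O(1/\lambda_n)$ terms) gives $\E[H^{(2)}_{\lambda_n}]\to\pi^2/6$ and $\E[O(1/\lambda_n)]\to 0$. Feeding in the computation $\E[\log(\lambda_n/\sqrt n)]\to\E[\log\Lambda]=\tfrac12(\log 2-\gamma)$ already done in the proof of Lemma \ref{PFcyclesmean} (equivalently, $\E[H_{\lambda_n}] = \tfrac12(\log(2n)+\gamma)+o(1)$, which is Lemma \ref{PFcyclesmean} itself), one obtains
\[
\E\!\left[\Var\!\left(K_n(\pi_n)\mid\lambda_n\right)\right] = \E[H_{\lambda_n}] - \E[H^{(2)}_{\lambda_n}] = \tfrac12\!\left(\log(2n)+\gamma\right) - \tfrac{\pi^2}{6} + o(1).
\]

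For the second term, $\E(K_n(\pi_n)\mid\lambda_n) = H_{\lambda_n} = \log\lambda_n + \gamma + O(1/\lambda_n)$, and a Cauchy--Schwarz bound on the covariance with the $O(1/\lambda_n)$ remainder shows $\Var(H_{\lambda_n}) = \Var(\log\lambda_n) + o(1) = \Var(\log(\lambda_n/\sqrt n)) + o(1)$. So it remains to evaluate $\Var(\log\Lambda)$. With the substitution $x=\sqrt{2t}$ and Remark \ref{rK: gamma}(2) (i.e. $\Gamma'(1)=-\gamma$, $\Gamma''(1)=\gamma^2+\pi^2/6$),
\[
\E[(\log\Lambda)^2] = \int_0^\infty (\log x)^2\, x e^{-x^2/2}\,dx = \tfrac14\int_0^\infty(\log 2+\log t)^2 e^{-t}\,dt = \tfrac14\!\left((\log 2-\gamma)^2 + \tfrac{\pi^2}{6}\right),
\]
so that $\Var(\log\Lambda) = \E[(\log\Lambda)^2] - (\E[\log\Lambda])^2 = \pi^2/24$. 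Adding the two contributions gives
\[
\Var(K_n(\pi_n)) = \tfrac12\!\left(\log(2n)+\gamma\right) - \tfrac{\pi^2}{6} + \tfrac{\pi^2}{24} + o(1) = \tfrac12\!\left(\log(2n)+\gamma\right) - \tfrac{\pi^2}{8} + o(1),
\]
as claimed.

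The hard part is not the algebra but justifying the limit transitions, namely that $\log(\lambda_n/\sqrt n)$ and $(\log(\lambda_n/\sqrt n))^2$ are uniformly integrable, so that $\E[\log(\lambda_n/\sqrt n)]\to\E[\log\Lambda]$ and $\E[(\log(\lambda_n/\sqrt n))^2]\to\E[(\log\Lambda)^2]$. The contributions from large $\lambda_n$ are absorbed by the moment convergence of $\Lambda_n$ from Lemma \ref{PFcyclicmomentconvergence}. The delicate regime is $\lambda_n$ small, where $|\log(\lambda_n/\sqrt n)|$ is large: here I would extract from the explicit mass function in Lemma \ref{PFcyclicPMF} a uniform bound $\PR(\lambda_n=k)\le C\,k/n$ for $1\le k\le\sqrt n$, whence $\sum_{k\le\delta\sqrt n}|\log(k/\sqrt n)|^{m}\,\PR(\lambda_n=k)\le C\int_0^{\delta} u\,|\log u|^{m}\,du$ for $m=1,2$, and the right-hand side vanishes as $\delta\to 0$ uniformly in $n$. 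This uniform-integrability bookkeeping, rather than any single difficult estimate, is the main obstacle.
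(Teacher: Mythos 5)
Your proof is correct and follows essentially the same route as the paper's: the law of total variance conditioned on $\lambda_n$, Goncharov's exact formulas for the conditional mean and variance, and the computation of $\E[\log\Lambda]$ and $\E[(\log\Lambda)^2]$ via the substitution $x=\sqrt{2t}$ and the values $\Gamma'(1)=-\gamma$, $\Gamma''(1)=\gamma^2+\pi^2/6$, yielding $\Var(\log\Lambda)=\pi^2/24$. The only difference is that you explicitly supply the uniform-integrability justification for the limit transitions $\E[\log(\lambda_n/\sqrt n)]\to\E[\log\Lambda]$ and $\E[(\log(\lambda_n/\sqrt n))^2]\to\E[(\log\Lambda)^2]$, which the paper leaves implicit.
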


\begin{proof}
Using Proposition \ref{GoncharovCLT}, we have that the variance of the number of cycles in a random permutation of size $n$ is
\begin{align}
\sum_{k=1}^n \left(\frac{1}{k}-\frac{1}{k^2}\right)=\log n + \gamma - \zeta(2) + o(1)=\log n + \gamma - \frac{\pi^2}{6} +o(1),
\end{align}
where we used Remark \ref{rK: gamma}(1) and properties of the Riemann zeta function $\zeta(\cdot)$.
By Proposition \ref{PFcyclicPDF},
\begin{align}
\begin{split}
& \lim_{n \rightarrow \infty} \E\left(\log^2 \left(\frac{\lambda_n(\pi_n)}{\sqrt{n}}\right)\right)=\E(\log^2 \Lambda) \\
= & \int_0^\infty \log^2 x \cdot x e^{-\frac{x^2}{2}} dx = \frac{1}{4} \int_0^\infty \log^2(2t) e^{-t} dt \\
= & \frac{1}{4} \left(\log^2 2 +2\log 2 \int_0^\infty \log t e^{-t} dt + \int_0^\infty \log^2 t e^{-t} dt\right) \\
= & \frac{1}{4} \left(\log^2 2 -2\gamma\log 2 +\gamma^2+\frac{\pi^2}{6} \right),
\end{split}
\end{align}
where in the middle equality we applied a change of variables $x=\sqrt{2t}$ and in the last equality we applied Remark \ref{rK: gamma}(2). Recall from the proof of Lemma \ref{PFcyclesmean} that
\begin{align}
\E(\log \lambda_n(\pi_n))=\log(\sqrt{n})+\frac{1}{2}(\log 2-\gamma).
\end{align}
This says that
\begin{align}
\begin{split}
&\VarR[\log \lambda_n(\pi_n)]=\E [\log^2 \lambda_n(\pi_n)] -\left(\E[\log \lambda_n(\pi_n)]\right)^2 \\
=& 2\log(\sqrt{n}) \left(\log(\sqrt{n})+\frac{1}{2}(\log 2-\gamma)\right) -(\log(\sqrt{n}))^2 \\
& \hspace{1cm}+\frac{1}{4} \left(\log^2 2 -2\gamma\log 2 +\gamma^2+\frac{\pi^2}{6} \right)-\left(\log(\sqrt{n})+\frac{1}{2}(\log 2-\gamma)\right)^2=\frac{\pi^2}{24}.
\end{split}
\end{align}
By the law of total variance, conditioning on $\lambda_n(\pi_n)$, we further have
\begin{align}
\VarR(K_n(\pi_n)) &= \E[\VarR(K_n(\pi_n) \mid \lambda_n(\pi_n))]+\VarR[\E(K_n(\pi_n) \mid \lambda_n(\pi_n))] \nonumber \\
=& \E\left[\log (\lambda_n(\pi_n)) + \gamma - \frac{\pi^2}{6} +o(1)\right]+\VarR \bigg[\log(\lambda_n(\pi_n)) + \gamma + o(1)\bigg] \\
=& \log(\sqrt{n}) + \frac{1}{2}(\log 2 - \gamma)+ \gamma - \frac{\pi^2}{6} + \frac{\pi^2}{24}+o(1) \nonumber \\
= & \frac{1}{2}(\log(2n) + \gamma) - \frac{\pi^2}{8}+o(1), \nonumber
\end{align}
where $\E(K_n(\pi_n) \mid \lambda_n(\pi_n))$ was previously computed in the proof of Lemma \ref{PFcyclesmean}. 
\end{proof}

Using the asymptotic mean and variance established in Lemmas \ref{PFcyclesmean} and \ref{PFcyclesvar}, we are ready to demonstrate the law of large numbers for the number of cycles. 

\begin{proof}[Proof of Theorem \ref{PFcyclesLLN}.]
Consider the decomposition
\begin{align}
    \frac{K_n(\pi_n)}{\frac{1}{2}\log n} - 1 = \frac{K_n(\pi_n) - \frac{1}{2}\log n}{\frac{1}{2}\log n} = \frac{K_n(\pi_n) - \E(K_n(\pi_n))}{\frac{1}{2}\log n} + \frac{\E(K_n(\pi_n)) - \frac{1}{2}\log n}{\frac{1}{2}\log n}. 
\end{align}
The second summand is deterministic and converges to $0$ as $n \to \infty$ by Lemma \ref{PFcyclesmean}. Fix $\epsilon > 0$. By Chebyshev's inequality and Lemma \ref{PFcyclesvar}, 
\begin{align}
\begin{split}
    \PR\left( \left| \frac{K_n(\pi_n) - \E(K_n(\pi_n))}{\frac{1}{2} \log n} \right| > \epsilon \right) &= \PR\left( \left| K_n(\pi_n) - \E(K_n(\pi_n)) \right| > \frac{\epsilon}{2} \log n \right) \\
    &\leq \frac{4 \Var(K_n(\pi_n))}{\epsilon^2 (\log n)^2} \\
    &= \frac{4 \left( \frac{1}{2} (\log(2n) + \gamma) - \frac{\pi^2}{8} + o(1) \right)}{\epsilon^2 (\log n)^2} \\
    &\to 0
    \end{split}
\end{align}
as $n \to \infty$, which implies that the first summand converges to $0$ in probability as $n \to \infty$. The proof follows.
\end{proof}

Next we move on to verify the central limit theorem for the number of cycles. 


\begin{proof}[Proof of Theorem \ref{PFcyclesCLT}]
Let $K_{0,n}$ be the number of cycles in a uniformly random permutation of size $n$. By conditioning on the number of cyclic points $\lambda_n$, we have that
\begin{align} \label{distrep}
    K_n(\pi_n) \stackrel{D}{=} K_{0,\lambda_n(\pi_n)},
\end{align}
where $\stackrel{D}{=}$ denotes equality in distribution. For notational simplicity, let $K_n := K_n(\pi_n)$ and $\lambda_n := \lambda_n(\pi_n)$. 
By Equation (\ref{distrep}), we can write 
\begin{align}
\begin{split}
    \frac{ K_n - \frac{1}{2}\log n }{ \sqrt{\frac{1}{2}\log n } } &\stackrel{D}{=} \frac{ K_{0,\lambda_n} - \frac{1}{2}\log n }{ \sqrt{\frac{1}{2}\log n } } \\
    &= \sqrt{\frac{\log \lambda_n}{ \frac{1}{2}\log n } } \left( \frac{ K_{0,\lambda_n} - \log \lambda_n }{ \sqrt{\log \lambda_n } } \right) + \frac{ \log \lambda_n - \frac{1}{2}\log n }{ \sqrt{\frac{1}{2}\log n } }.
    \end{split}
\end{align}
By Proposition \ref{GoncharovCLT} and the fact that $\lambda_n \to \infty$ almost surely as $n \to \infty$, we have that
\begin{align}
\frac{ K_{0,\lambda_n} - \log \lambda_n }{ \sqrt{\log \lambda_n } } \xrightarrow{D} \mathcal{N}(0,1),
\end{align}
as $n \to \infty$. Moreover, 
\begin{align}
\frac{\log \lambda_n}{ \frac{1}{2}\log n } = \frac{\log(\sqrt{n}) + \log\left( \frac{\lambda_n}{\sqrt{n}} \right)}{\log(\sqrt{n})} = 1 + \frac{\log\left( \frac{\lambda_n}{\sqrt{n}} \right)}{\log(\sqrt{n})} \xrightarrow{D} 1,
\end{align}
where $\frac{\log\left( \frac{\lambda_n}{\sqrt{n}} \right)}{\log(\sqrt{n})} \xrightarrow{D} 0$ as $n \to \infty$ follows from the fact that $\frac{\lambda_n}{\sqrt{n}} \xrightarrow{D} \Lambda$ by Proposition \ref{PFcyclicPDF}. Since convergence in distribution to a constant implies convergence in probability to that same constant, the convergence to $1$ in the above also holds in probability. Thus by Slutsky's theorem, the first summand converges as
\begin{align}
    \sqrt{\frac{\log \lambda_n}{ \frac{1}{2}\log n } } \left( \frac{ K_{0,\lambda_n} - \log \lambda_n }{ \sqrt{\log \lambda_n } } \right) \xrightarrow{D} \mathcal{N}(0,1),
\end{align}
as $n \to \infty$. Finally, the second summand can be rewritten as $\frac{ \log \lambda_n - \frac{1}{2}\log n }{ \sqrt{\frac{1}{2}\log n } } = \frac{\log\left( \frac{\lambda_n}{\sqrt{n}} \right)}{ \sqrt{\log(\sqrt{n})} }$, which converges in probability to $0$ as $n \to \infty$ by the same argument as before. Therefore another application of Slutsky's theorem yields
\begin{align}
\frac{ K_n - \frac{1}{2}\log n }{ \sqrt{\frac{1}{2}\log n } } \stackrel{D}{\longrightarrow} \mathcal{N}(0,1),
\end{align}
as $n \to \infty$. 
\end{proof}



Lastly, we illustrate that the family of random variables $\left\{ \frac{K_n(\pi_n)}{\log n}\right\}_{n \geq 1}$ satisfies a large deviation principle and compute the speed and rate function explicitly.

\begin{proof}[Proof of Theorem \ref{PFcyclesLDP}.]
For notational simplicity, let $K_n := K_n(\pi_n)$ and $\lambda_n := \lambda_n(\pi_n)$. 
We will use the G\"{a}rtner-Ellis theorem [Theorem 2.3.6, \cite{DZ}]. To this end, we begin by showing the existence of the limit
\begin{align}
    \lim_{n \to \infty} \frac{1}{\log n} \log\E\left[ \exp\left(tK_n\right) \right]
\end{align}
for all $t \in \R$. 

Let $K_{0,n}$ denote the number of cycles in a random permutation of size $n$. Recall from Equation (\ref{distrep}) that $K_n \stackrel{D}{=} K_{0,\lambda_n}$, where $\lambda_n$ is the number of cyclic points and $\stackrel{D}{=}$ denotes equality in distribution. Moreover, $K_{0,n}$ can be written as $K_{0,n} = \sum_{k=1}^n Y_k$, where $Y_k \sim \Ber(1/k)$ are independent Bernoulli random variables (see for example Chapter 3 of \cite{Pit06}).

The moment generating function of $K_n$ is computed as
\begin{align}
    \E[\exp\left(t K_n \right)] = \E[ \exp\left(t K_{0,\lambda_n} \right)] = \E\left[ \exp\left(t \sum_{k=1}^{\lambda_n} Y_k \right) \right],
\end{align}
where $Y_k \sim \Ber(1/k)$ are independent Bernoulli random variables. Using the independence of $\{Y_k\}$ and asymptotic properties of the Gamma function, 
\begin{align}
    \E\left[ \exp\left(t \sum_{k=1}^{\lambda_n} Y_k \right) \middle\vert \lambda_n\right] = \prod_{k=1}^{\lambda_n} \E\left[ \exp\left( tY_k \right) \right] = \prod_{k=1}^{\lambda_n} \frac{e^t + k - 1}{k} = \frac{1}{\Gamma(e^t)}\frac{\Gamma(e^t + \lambda_n)}{\Gamma(1 + \lambda_n)}. 
\end{align}
By the law of total expectation, 
\begin{align}
    \E[\exp\left(t K_n \right)] &= \E\left[\E\left[ \exp\left(t \sum_{k=1}^{\lambda_n} Y_k \right) \middle\vert \lambda_n\right]\right] =  \E\left[ \frac{\Gamma(1)}{\Gamma(e^t)}\frac{\Gamma(e^t + \lambda_n)}{\Gamma(1 + \lambda_n)} \right] \sim \frac{\E\left[ \lambda_n^{e^t - 1}\right]}{\Gamma(e^t)}
\end{align}
for $n$ large, since $\lambda_n \to \infty$ almost surely as $n \to \infty$. 
Therefore 
\begin{align}
\begin{split}
    \lim_{n \to \infty} \frac{1}{\log n} \log\E\left[ \exp\left(tK_n\right) \right] &= \lim_{n \to \infty} \frac{1}{\log n} \log\left( \frac{\E\left[ \lambda_n^{e^t - 1}\right]}{\Gamma(e^t)} \right) \\
    &= \lim_{n \to \infty} \frac{1}{\log n} \log\left(\frac{\E\left[ n^{\frac{1}{2}(e^t - 1)} \left(\frac{\lambda_n}{\sqrt{n}}\right)^{e^t - 1}\right] }{\Gamma(e^t)} \right) \\
    &= \lim_{n \to \infty} \frac{\frac{1}{2}(e^t - 1) \log n + \log\E\left[\left(\frac{\lambda_n}{\sqrt{n}}\right)^{e^t - 1}\right] - \log(\Gamma(e^t))}{\log n} \\
    &= \frac{1}{2}(e^t - 1) + \lim_{n \to \infty} \frac{\log\E\left[\left(\frac{\lambda_n}{\sqrt{n}}\right)^{e^t - 1}\right]}{\log n}. 
    \end{split}
\end{align}
Observe that for all $t \in \R$, we have that $e^t - 1 \in (-1, \infty)$. 
Thus by Lemma \ref{PFcyclicmomentconvergence}, 
\begin{align} \label{remainderlimitLDP}
\begin{split}
    &\lim_{n \to \infty} \frac{\log\E\left[\left(\frac{\lambda_n}{\sqrt{n}}\right)^{e^t - 1}\right]}{\log n} = \lim_{n \to \infty} \frac{\log\left(2^{(e^t - 1)/ 2} \Gamma\left( 1 + \frac{ e^t - 1}{2} \right)\right)}{\log n} = 0
    \end{split}
\end{align}
for fixed $t \in \R$ as $n \to \infty$.
Finally, note that $\{t : \frac{1}{2}(e^t - 1) < \infty\} = \R$ and that $t \mapsto \frac{1}{2}(e^t - 1)$ is differentiable. 

Therefore by the G\"{a}rtner-Ellis theorem, $\left\{\frac{K_n}{\log n}\right\}$ satisfies a large deviation principle with speed $\log n$ and rate function $I(x) = \sup_{t \in \R} \left\{tx - \frac{1}{2}(e^t - 1) \right\}$. The expression for $I(x)$ in the statement follows upon noting that the supremum is attained at $t = \log 2x$ if $x \geq 0$ and at $t = -\infty$ if $x < 0$.  
\end{proof}

\section{Lengths of the longest cycles}
\label{sec:len}
In this section, we study the lengths of the longest cycles in a random parking function. Building upon an observation of Kolchin \cite{Kol86} that the cyclic points of a random mapping form a random permutation, Finch \cite[Section 3]{Fin22} derived the joint density of the number of cyclic points $\lambda_n(f_n)$ and the length of the $r$th longest cycle $L_r(f_n)$ of a random mapping $f_n$ by multiplying the conditional probability of $L_r(f_n)$ given $\lambda_n(f_n)$ with the limiting density of $\lambda_n(f_n)$ and then differentiating with respect to $L_r(f_n)$. Recall from Remark \ref{rk:exp_cyclic} that the cyclic points of a random parking function form a random permutation, allowing us to adapt Finch's technique for random mappings to random parking functions. In what follows, let $L_r(\pi_n)$ be the length of the $r$th longest cycle of a uniformly random parking function $\pi_n \in \PF_n$, defined to be $0$ if the number of cycles $K_n(\pi_n)$ is less than $r$. We will obtain the joint distribution of the number of cyclic points $\lambda_n(\pi_n)$ and the length of the $r$th longest cycle $L_r(\pi_n)$ of a random parking function $\pi_n$. 

\begin{proposition} \label{CyclicLongestJointPDF}
    Let $L_r(\pi_n)$ be the length of the $r$th longest cycle of a uniformly random parking function $\pi_n \in \PF_n$. Let $\lambda_n(\pi_n)$ be the number of cyclic points of $\pi_n$. If $\frac{k}{\sqrt{n}} \to x > 0$ and $\frac{\ell}{\sqrt{n}} \to y > 0$ as $n \to \infty$, then
    \begin{align}
        \PR\left( \lambda_n(\pi_n) = k, L_r(\pi_n) \leq \ell \right) \sim \frac{1}{\sqrt{n}} xe^{-x^2/2} \rho_r\left( \frac{x}{y} \right), 
    \end{align}
    where $\rho_r$ is the $r$th generalized Dickman function.
\end{proposition}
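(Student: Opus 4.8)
The plan is to condition on the number of cyclic points and thereby split the joint distribution into a Rayleigh-type factor and a Dickman-type factor, following Finch's strategy for random mappings. The key structural input is that, conditioned on $\{\lambda_n(\pi_n)=k\}$, the cycle type of the permutation induced on the $k$ cyclic points is distributed exactly as the cycle type of a uniformly random permutation in $S_k$. This is not merely an assertion: it can be read off from the computation in the proof of Theorem \ref{classical-cyclic}, since for a fixed cyclic cycle type $\mu=(1^{k_1},2^{k_2},\dots)$ with $\sum_i ik_i=k$ the corresponding summand in \eqref{long-cyclic} factors as $\frac{1}{\prod_i i^{k_i}k_i!}$ times a quantity depending only on $n$ and $k$, and $\frac{k!}{\prod_i i^{k_i}k_i!}$ is precisely the number of permutations in $S_k$ of cycle type $\mu$. (In particular this recovers the identity $(K_n(\pi_n)\mid\lambda_n)\stackrel{D}{=}K_{0,\lambda_n}$ used in the proof of Lemma \ref{PFcyclesconditionalCLT}.) Because the cycles of the digraph $G_{\pi_n}$ are precisely the cycles of this cyclic permutation — the attached tree components contribute none — it follows that $(L_r(\pi_n)\mid\lambda_n(\pi_n)=k)\stackrel{D}{=}L_r(\sigma_k)$ with $\sigma_k\in S_k$ uniformly random, where $L_r(\sigma_k)$ is as in Propositions \ref{Shepp-LloydLimit} and \ref{LongCyclesDickman}.

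Granting this, I would write
\[
\PR\big(\lambda_n(\pi_n)=k,\ L_r(\pi_n)\le\ell\big)=\PR\big(\lambda_n(\pi_n)=k\big)\cdot\PR\big(L_r(\sigma_k)\le\ell\big),
\]
and treat the two factors separately. For the first, Proposition \ref{PFcyclicPDF} gives $\PR(\lambda_n(\pi_n)=k)\sim\frac{1}{\sqrt n}xe^{-x^2/2}$ since $k/\sqrt n\to x>0$. For the second, observe that $k\to\infty$ and $\ell/k=(\ell/\sqrt n)/(k/\sqrt n)\to y/x>0$, so Proposition \ref{LongCyclesDickman} applies with $k$ in place of $n$. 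The one technical point is that we are evaluating the limiting cumulative distribution function at the moving argument $\ell/k$ rather than at a fixed one; since $z\mapsto\rho_r(1/z)$ is continuous on $(0,\infty)$, this is resolved by P\'olya's theorem (pointwise convergence of distribution functions to a continuous limit is uniform), or, more elementarily, by squeezing $\PR(L_r(\sigma_k)\le\ell)$ between $\PR(L_r(\sigma_k)/k\le y/x-\delta)$ and $\PR(L_r(\sigma_k)/k\le y/x+\delta)$ for $k$ large and letting $\delta\downarrow0$ using continuity of $\rho_r$. Either way, $\PR(L_r(\sigma_k)\le\ell)\to\rho_r(x/y)$.

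It remains to combine the factors. Since $\rho_r(x/y)>0$ — it equals $1$ when $x\le y$, while for $x>y$ one has $\rho_r\ge\rho_1>0$ — multiplying the asymptotic equivalence for $\PR(\lambda_n(\pi_n)=k)$ by the convergent positive sequence $\PR(L_r(\sigma_k)\le\ell)\to\rho_r(x/y)$ yields
\[
\PR\big(\lambda_n(\pi_n)=k,\ L_r(\pi_n)\le\ell\big)\sim\frac{1}{\sqrt n}\,xe^{-x^2/2}\,\rho_r\!\left(\frac{x}{y}\right).
\]
I expect the main obstacle to be pinning down the conditional uniformity of the cyclic permutation's cycle type in the first step; once that is in place, the rest is a short conditioning argument assembling Propositions \ref{PFcyclicPDF} and \ref{LongCyclesDickman}, with the moving-threshold issue being routine.
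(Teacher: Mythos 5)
Your proposal is correct and follows essentially the same route as the paper: factor the joint probability as $\PR(L_r(\pi_n)\le\ell\mid\lambda_n(\pi_n)=k)\,\PR(\lambda_n(\pi_n)=k)$, then apply Proposition \ref{PFcyclicPDF} to the second factor and Proposition \ref{LongCyclesDickman} (with $k$ in place of $n$) to the first. The only difference is that you spell out two points the paper leaves implicit — the conditional uniformity of the cyclic permutation and the moving-threshold continuity argument — both of which are correct and worthwhile additions.
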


\begin{proof}
We can rewrite the joint probability as
\begin{align}
\PR(\lambda_n(\pi_n) = k, L_r(\pi_n) \leq \ell) = \PR( L_r(\pi_n) \leq \ell \mid \lambda_n(\pi_n) = k) \PR(\lambda_n(\pi_n) = k). 
\end{align}
For $n \to \infty$, we have that $\PR(\lambda_n(\pi_n) = k) \sim \frac{1}{\sqrt{n}}xe^{-x^2/2}$ by Proposition \ref{PFcyclicPDF} and $\PR( L_r(\pi_n) \leq \ell \mid \lambda_n(\pi_n) = k) \sim \rho_r\left( \frac{x}{y} \right)$ by Proposition \ref{LongCyclesDickman}. The proof follows. 
\end{proof}

\begin{remark} \label{CyclicLongestAsympJointPDF}
Differentiating the asymptotic distribution function in Proposition \ref{CyclicLongestJointPDF} with respect to $y$ gives 
\begin{align}
\frac{d}{dy} xe^{-x^2/2} \rho_r\left( \frac{x}{y} \right) = \frac{x}{y} e^{-x^2/2} \left( -\frac{x}{y} \right) \rho_r'\left(\frac{x}{y} \right). 
\end{align} 
Using the delay differential equation (\ref{DDE}), we get that
\begin{align}
 -\frac{x}{y} \rho_r'\left(\frac{x}{y} \right) = \rho_r\left( \frac{x}{y} - 1 \right) - \rho_{r-1}\left( \frac{x}{y} - 1 \right), \qquad \text{for $x > y >0$ and $r\geq 1$.}
\end{align}
Thus $\left( \frac{\lambda_n(\pi_n)}{\sqrt{n}}, \frac{L_r(\pi_n)}{\sqrt{n}} \right)$ has a limiting joint density function given by 
\begin{align}
f(x,y) = \frac{x}{y} e^{-x^2/2} \left( \rho_r\left( \frac{x - y}{y}\right) - \rho_{r-1}\left( \frac{x - y}{y}\right) \right), \qquad \text{for $x > y >0$ and $r\geq 1$.}
\end{align}
\end{remark}

We now establish the asymptotic mean of $L_r(\pi_n)$ for any valid $r$. The proof is similar to the proof of the corresponding result for random mappings \cite{PW68}, although there the conditioning on the cyclic points was implicit. 

\begin{theorem} \label{LongCyclesAsymptoticMean}
    Let $L_r(\pi_n)$ be the length of the $r$th longest cycle of a uniformly random parking function $\pi_n \in \PF_n$. Then
    \begin{align}
        \frac{\E(L_r(\pi_n))}{\sqrt{n}} \to \sqrt{\frac{\pi}{2}} G_{r,1}
    \end{align}
    as $n \to \infty$, where $G_{r,1} := \frac{1}{\Gamma(r)} \int_0^\infty E_1(x)^{r-1} e^{-E_1(x) - x} \, dx$ with $E_1(x) = \int_x^\infty t^{-1} e^{-t} \, dt$.
\end{theorem}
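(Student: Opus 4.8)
The plan is to deduce the asymptotic mean of $L_r(\pi_n)/\sqrt n$ from the joint limiting density in Remark \ref{CyclicLongestAsympJointPDF} together with the Shepp--Lloyd constant $G_{r,1}$ from Proposition \ref{Shepp-LloydLimit}. The heuristic is clean: conditional on $\lambda_n(\pi_n) = k$, the $k$ cyclic points carry a uniformly random permutation of $[k]$, so $\E(L_r(\pi_n) \mid \lambda_n(\pi_n) = k) = \E(L_r(\sigma_k))$, which by Proposition \ref{Shepp-LloydLimit} is asymptotic to $G_{r,1}\,k$. Taking expectations over $\lambda_n(\pi_n)$ and using $\E(\lambda_n(\pi_n)) \sim \sqrt{\pi n/2}$ from Lemma \ref{PFcyclicmean} gives $\E(L_r(\pi_n)) \sim G_{r,1}\,\E(\lambda_n(\pi_n)) \sim G_{r,1}\sqrt{\pi n/2}$, which is exactly the claimed limit.

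First I would record the exact conditional identity $(L_r(\pi_n) \mid \lambda_n(\pi_n) = k) \stackrel{D}{=} L_r(\sigma_k)$, where $\sigma_k \in S_k$ is uniform; this is the same observation used in the proof of Lemma \ref{PFcyclesmean} that the cyclic points form a uniformly random permutation, combined with the fact (highlighted in the introduction) that permuting rooted trees preserves the parking property, so all arrangements of the tree components over a fixed cyclic-point permutation are equally likely. Next I would write
\begin{align*}
\E(L_r(\pi_n)) = \sum_{k=1}^{n} \E(L_r(\sigma_k))\,\PR(\lambda_n(\pi_n) = k) = \E\big(g(\lambda_n(\pi_n))\big),
\end{align*}
where $g(k) := \E(L_r(\sigma_k))$, and then divide by $\sqrt n$. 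Since $g(k)/k \to G_{r,1}$ by Proposition \ref{Shepp-LloydLimit} and $0 \le g(k) \le k$, the function $g(k)/\sqrt n$ is dominated by $\lambda_n(\pi_n)/\sqrt n$, whose moments converge by Lemma \ref{PFcyclicmomentconvergence} (in particular the first moment converges, giving uniform integrability of $\Lambda_n$). Combining the pointwise convergence $g(\lambda_n(\pi_n))/\sqrt n \approx G_{r,1}\,\lambda_n(\pi_n)/\sqrt n \xrightarrow{D} G_{r,1}\Lambda$ with uniform integrability yields $\E(L_r(\pi_n))/\sqrt n \to G_{r,1}\,\E(\Lambda) = G_{r,1}\sqrt{\pi/2}$, as desired. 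Alternatively, one can integrate the limiting density $f(x,y)$ of Remark \ref{CyclicLongestAsympJointPDF} against $y$, invoking the known permutation identity $\frac{1}{\Gamma(r)}\int_0^\infty E_1(x)^{r-1} e^{-E_1(x)-x}\,dx$ appearing inside, to confirm the constant.

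The main obstacle is the interchange of limit and expectation: pointwise (in distribution) convergence of $L_r(\pi_n)/\sqrt n$ does not by itself give convergence of means, so one must justify uniform integrability. The clean way to do this is the sandwich $L_r(\pi_n) \le \lambda_n(\pi_n)$ together with $L^p$-boundedness of $\lambda_n(\pi_n)/\sqrt n$ for some $p > 1$ from Lemma \ref{PFcyclicmomentconvergence} (e.g. $p = 2$, giving $\sup_n \E(\Lambda_n^2) < \infty$). A secondary technical point is that the asymptotic $g(k) \sim G_{r,1}k$ from Proposition \ref{Shepp-LloydLimit} is only a statement about $k \to \infty$, so one should split the sum over $k$ into a negligible small-$k$ part (where $g(k) \le k$ and $\PR(\lambda_n(\pi_n)=k)$ contributes $o(\sqrt n)$ in aggregate, since $\lambda_n(\pi_n)/\sqrt n$ concentrates away from $0$) and a large-$k$ part where the ratio is uniformly close to $G_{r,1}$; both parts are controlled by the Rayleigh limit and the moment bounds already in hand.
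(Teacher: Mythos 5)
Your proposal follows essentially the same route as the paper: condition on the number of cyclic points, apply the Shepp--Lloyd asymptotic $\E(L_r(\sigma_k)) \sim G_{r,1}k$, and pass to the limit using $\E(\lambda_n(\pi_n)/\sqrt{n}) \to \sqrt{\pi/2}$. Your treatment is in fact slightly more careful than the paper's, which invokes dominated convergence without exhibiting the dominating function; your sandwich $L_r(\pi_n) \leq \lambda_n(\pi_n)$ together with the moment bounds of Lemma \ref{PFcyclicmomentconvergence} and the small-$k$/large-$k$ split supplies exactly the uniform integrability the paper leaves implicit.
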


\begin{proof}
Conditioning on the number of cyclic points and applying Proposition \ref{Shepp-LloydLimit}, we can write
\begin{align} \label{cyclelengthmeandecomp}
    \E\left[ \frac{L_r(\pi_n)}{\sqrt{n}} \right] &=\E\left[ \frac{1}{\sqrt{n}} \E\left[ L_r(\pi_n) \vert \lambda_n(\pi_n) \right] \right] = G_{r,1} \E\left[\frac{\lambda_n(\pi_n)}{\sqrt{n}} \right] + \E\left[\frac{\lambda_n(\pi_n)}{\sqrt{n}} \epsilon_{r,1,\lambda_n(\pi_n)} \right] \notag \\
    &= \E\left[\frac{\lambda_n(\pi_n)}{\sqrt{n}} \right]\left( G_{r,1} + \frac{\E[\lambda_n(\pi_n) \epsilon_{r,1,\lambda_n(\pi_n)}]}{\E[\lambda_n(\pi_n)]} \right).
\end{align}
By Lemma \ref{PFcyclicmean}, $\E\left[\frac{\lambda_n(\pi_n)}{\sqrt{n}} \right] \to \sqrt{\frac{\pi}{2}}$ as $n \to \infty$. 
Our claim follows once we show that 
\begin{align} \label{PWLimitTrick}
    \lim_{n \to \infty} \frac{\E\left[\lambda_n(\pi_n) \epsilon_{r,1,\lambda_n(\pi_n)} \right]}{\E[\lambda_n(\pi_n)]} = 0.
\end{align}

Fix $\delta > 0$. Since $|\epsilon_{r,1,k}| \to 0$ as $k \to \infty$, there exists $M > 0$ such that $|\epsilon_{r,1,k}| < \delta$ for all $k > M$. Thus for $n$ sufficiently large, we can write
\begin{align} \label{cyclesecondtermdecomp}
\begin{split}
    \left|\frac{\E\left[\lambda_n(\pi_n) \epsilon_{r,1,\lambda_n} \right]}{\E[\lambda_n(\pi_n)]} \right| &= \left| \frac{\sum_{k=1}^M kP(\lambda_n(\pi_n) = k) \epsilon_{r,1,k}}{\E[\lambda_n(\pi_n)]} + \frac{\sum_{k=M+1}^n kP(\lambda_n(\pi_n) = k) \epsilon_{r,1,k}}{\E[\lambda_n(\pi_n)]} \right| \\
    &\leq \frac{\sum_{k=1}^M kP(\lambda_n(\pi_n) = k) \left|\epsilon_{r,1,k}\right|}{|\E[\lambda_n(\pi_n)]|} + \delta.
\end{split}
\end{align}
On the other hand, taking $N$ sufficiently large so that 
\begin{align}
    \frac{\sum_{k=1}^M kP(\lambda_n(\pi_n) = k)}{\E[\lambda_n(\pi_n)]} < \delta
\end{align}
for all $n > N$, we have that
\begin{align}
    \frac{\sum_{k=1}^M kP(\lambda_n(\pi_n) = k) \epsilon_{r,1,k}}{\E[\lambda_n(\pi_n)]} < \delta \cdot \max_{1 \leq k \leq M} |\epsilon_{r,1,k}|
\end{align}
for all $n > N$. Since $\delta > 0$ was arbitrary, this proves (\ref{PWLimitTrick}) and the desired claim follows. 
\end{proof}

This result says that the average length of the longest cycle of a random parking function of size $n$ is asymptotically $0.7825\sqrt{n}$, the same as for random mappings. Recall from Section \ref{Sec: Prelims} that the average length of the longest cycle of a random permutation of size $n$ is asymptotically $0.6243n$.

\section{Limit theorems for prime parking functions}
\label{sec:prime}
In this section, we establish the corresponding limit theorems for random prime parking functions. The key result is the following lemma, which implies that the probability mass function of the number of cyclic points of a random prime parking function is asymptotically equal to that of a random classical parking function. 

\begin{lemma} \label{PPFcyclicPMF}
Let $\lambda_n(\pi_n)$ be the number of cyclic points of a uniformly random prime parking function $\pi_n \in \PPF_n$. Then
\begin{align}
\PR(\lambda_n(\pi_n) = k) = \frac{k (n-2)!}{(n-1)^k (n-1-k)!}
\end{align}
for all $1 \leq k \leq n-1$. 
\end{lemma}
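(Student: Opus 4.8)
The plan is to mirror the proof of Lemma~\ref{PFcyclicPMF} exactly, replacing the classical enumeration input with the prime enumeration input. Since $\pi_n$ is uniform on $\PPF_n$ and $|\PPF_n| = (n-1)^{n-1}$, we have $\PR(\lambda_n(\pi_n) = k) = |\PPF_n^{(k)}| / (n-1)^{n-1}$ for each valid $k$, and the valid range is precisely $1 \le k \le n-1$ by the remark preceding Theorem~\ref{prime-cyclic} that a prime parking function cannot have $n$ cyclic points. So the proof is a one-line substitution of the formula for $|\PPF_n^{(k)}|$ from Theorem~\ref{prime-cyclic}.

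Concretely, first I would invoke Theorem~\ref{prime-cyclic} to write $|\PPF_n^{(k)}| = k\,k!\binom{n-1}{k}(n-1)^{n-k-2}$. Then I would divide by $(n-1)^{n-1}$ and simplify: expanding $k!\binom{n-1}{k} = (n-1)!/(n-1-k)!$, we get
\begin{align*}
\PR(\lambda_n(\pi_n) = k) = \frac{k\,k!\binom{n-1}{k}(n-1)^{n-k-2}}{(n-1)^{n-1}} = \frac{k\,(n-1)!}{(n-1-k)!\,(n-1)^{k+1}} = \frac{k\,(n-2)!}{(n-1)^k\,(n-1-k)!},
\end{align*}
where the last step uses $(n-1)!/(n-1) = (n-2)!$. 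This matches the claimed expression.

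There is essentially no obstacle here: all the combinatorial work was already carried out in Theorem~\ref{prime-cyclic}, whose proof in turn reduces to the classical case by running Kalikow's circular argument on $n-1$ spots rather than $n+1$. The only point requiring a moment's care is bookkeeping the exponent of $n-1$ and confirming the range of $k$; everything else is routine algebra. The payoff, as noted in the section introduction, is that this probability mass function is asymptotically equal to the classical one from Lemma~\ref{PFcyclicPMF}, so all the subsequent asymptotic results (Rayleigh limit, the trio of limit theorems, the longest-cycle asymptotics) transfer to prime parking functions by the same arguments as in Sections~\ref{sec:dist}--\ref{sec:len}.
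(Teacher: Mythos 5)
Your proposal is correct and is exactly the paper's own argument: write $\PR(\lambda_n(\pi_n)=k)=|\PPF_n^{(k)}|/(n-1)^{n-1}$, substitute the formula from Theorem~\ref{prime-cyclic}, and simplify. The paper merely states that the claim ``readily follows''; your algebraic simplification and range check are the routine details it leaves implicit.
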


\begin{proof}
Let $|\PPF_n^{(k)}|$ denote the number of prime parking functions of size $n$ with exactly $k$ cyclic points. Then $\PR(\lambda_n(\pi_n) = k) = \frac{|\PPF_n^{(k)}|}{(n-1)^{n-1}}$. Our claim readily follows from Theorem \ref{prime-cyclic}.
\end{proof}

From this result, we can see that the asymptotic behavior of $\lambda_n(\pi_n)$ in the prime parking functions case is the same as that in the classical parking functions case. In particular, for a random prime parking function we also have that 
\begin{align}
        \Lambda_n = \frac{\lambda_n(\pi_n)}{\sqrt{n}} \stackrel{D}{\longrightarrow} \Lambda \sim \Rayleigh(1)
\end{align}
as $n \to \infty$. Therefore all of our limit theorems for classical parking functions also hold for prime parking functions, with the same proofs. We summarize this in the following comprehensive theorem.

\begin{theorem} \label{PPFLimitTheorems}
Let $\pi_n \in \PPF_n$ be a uniformly random prime parking function of length $n$. 
\begin{enumerate}
\item Let $\lambda_n(\pi_n)$ be the number of cyclic points of $\pi_n$. If $k/\sqrt{n} \to x > 0$ as $n \to \infty$, then 
\begin{align}
    \PR(\lambda_n = k) \sim \frac{1}{\sqrt{n}}xe^{-x^2/2},
\end{align}
so that the scaled number of cyclic points $\Lambda_n = \frac{\lambda_n(\pi_n)}{\sqrt{n}} \xrightarrow{D} \Lambda$ as $n \to \infty$, where $\Lambda$ is a $\Rayleigh(1)$ random variable.
\item (Law of large numbers) Let $K_n(\pi_n)$ be the number of cycles of $\pi_n$. Then
\begin{align}
    \frac{K_n(\pi_n)}{\frac{1}{2}\log n} \xrightarrow{\PR} 1
\end{align}
as $n \to \infty$. 
\item (Central limit theorem) Let $K_n(\pi_n)$ be the number of cycles of $\pi_n$. Then
\begin{align}
\frac{K_n(\pi_n) - \frac{1}{2}\log n}{\sqrt{\frac{1}{2}\log n }} \xrightarrow{D} \mathcal{N}(0,1),
\end{align}
as $n \to \infty$, where $\mathcal{N}(0,1)$ is a standard normal random variable.
\item (Large deviation principle) The family $\left\{ \frac{K_n(\pi_n)}{\log n}\right\}_{n \geq 1}$ satisfies a large deviation principle with speed $\log n$ and rate function
\begin{align}
I(x) = \begin{cases}
x\log\left( 2x \right) - x + \frac{1}{2} & x \geq 0, \\
\infty & x < 0. 
\end{cases}
\end{align}
\item Let $L_r(\pi_n)$ be the length of the $r$th longest cycle of $\pi_n$, defined to be $0$ if the number of cycles, $K_n(\pi_n)$, is less than $r$. Then
\begin{align}
\frac{\E(L_r(\pi_n))}{\sqrt{n}} \to \sqrt{\frac{\pi}{2}} G_{r,1}
\end{align}
as $n \to \infty$, where $G_{r,1} := \frac{1}{\Gamma(r)} \int_0^\infty E_1(x)^{r-1} e^{-E_1(x) - x} \, dx$ with $E_1(x) = \int_x^\infty t^{-1} e^{-t} \, dt$.
\end{enumerate}
\end{theorem}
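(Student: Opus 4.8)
The plan is to reduce the entire theorem to the classical case treated in Sections \ref{sec:dist}--\ref{sec:len}, since every proof there rests on only three ingredients: (a) the Rayleigh limit $\Lambda_n = \lambda_n(\pi_n)/\sqrt{n} \xrightarrow{D} \Rayleigh(1)$ together with convergence of the moments $\E(\Lambda_n^p) \to \E(\Lambda^p)$; (b) the structural fact that, conditioned on $\lambda_n(\pi_n) = k$, the map induced on the $k$ cyclic points is a uniformly random element of $S_k$, so that $(K_n(\pi_n) \mid \lambda_n) \stackrel{D}{=} K_{0,\lambda_n}$ and $(L_r(\pi_n) \mid \lambda_n)$ is the length of the $r$th longest cycle of a uniform permutation of size $\lambda_n$; and (c) the three permutation inputs, Propositions \ref{GoncharovCLT}, \ref{Shepp-LloydLimit}, and \ref{LongCyclesDickman}. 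Ingredient (c) is model-independent. For (b), I would observe that the inner summand in Theorem \ref{prime-cyclic} factorizes, exactly as in Theorem \ref{classical-cyclic}, into $\frac{1}{\prod_i i^{k_i} k_i!}$ times a tree-attachment count depending only on $k$ and not on the cycle type $(1^{k_1}, 2^{k_2}, \dots)$; since $\sum \frac{1}{\prod_i i^{k_i} k_i!}$ over all cycle types of $k$ equals $1$, the conditional law of the induced permutation is uniform on $S_k$, just as for classical parking functions. Thus the only genuinely new work is ingredient (a).

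First I would prove part (1). From Lemma \ref{PPFcyclicPMF}, set $k = x\sqrt{n}$ with $x > 0$ (so $k < n-1$ for $n$ large) and apply Stirling's approximation to $(n-2)!$ and $(n-1-x\sqrt{n})!$ in $\PR(\lambda_n = k) = \frac{x\sqrt{n}\,(n-2)!}{(n-1)^{x\sqrt{n}}(n-1-x\sqrt{n})!}$. This is, term for term, the computation in the proof of Proposition \ref{PFcyclicPDF} with $n+1$ replaced by $n-1$ and $n!$ by $(n-2)!$; because these are bounded additive or multiplicative perturbations, the exponent collapses in the same manner to $1 - x\sqrt{n} - \frac{1}{2}\log n$, giving $\PR(\lambda_n = k) \sim \frac{1}{\sqrt{n}} x e^{-x^2/2}$. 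The Riemann-sum argument from Proposition \ref{PFcyclicPDF} then promotes this local limit theorem to $\Lambda_n \xrightarrow{D} \Lambda \sim \Rayleigh(1)$, which is part (1). The same PMF asymptotics give $\PR(\lambda_n \leq M) \to 0$ for each fixed $M$, hence $\lambda_n \to \infty$, and the moment convergence $\E(\Lambda_n^p) \to \E(\Lambda^p) = 2^{p/2}\Gamma(1 + \frac{p}{2})$ for all integers $p \geq 1$ follows exactly as in Lemma \ref{PFcyclicmomentconvergence}, either by computing the $p$th moment directly from the PMF or by dominating $\Lambda_n^p$ by an integrable envelope uniformly in $n$.

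With (a)--(c) in hand, I would then obtain parts (2)--(5) by transcribing the classical proofs essentially verbatim, since each uses only these three inputs. The conditional mean and variance computations of Lemmas \ref{PFcyclesmean} and \ref{PFcyclesvar} rely on $(K_n \mid \lambda_n) \stackrel{D}{=} K_{0,\lambda_n}$, Proposition \ref{GoncharovCLT}, and the Rayleigh moments $\E(\log \Lambda)$ and $\E(\log^2 \Lambda)$ --- all unchanged --- so $\E(K_n(\pi_n)) = \frac{1}{2}(\log(2n) + \gamma) + o(1)$ and $\Var(K_n(\pi_n)) = \frac{1}{2}(\log(2n) + \gamma) - \frac{\pi^2}{8} + o(1)$ continue to hold, and Chebyshev's inequality yields part (2). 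The conditional CLT of Lemma \ref{PFcyclesconditionalCLT} and its Slutsky/dominated-convergence upgrade to the unconditional CLT use only $(K_n \mid \lambda_n) \stackrel{D}{=} K_{0,\lambda_n}$, Proposition \ref{GoncharovCLT}, $\lambda_n \to \infty$, and $\Lambda_n \xrightarrow{D} \Lambda$, giving part (3). For part (4), the G\"{a}rtner--Ellis computation in the proof of Theorem \ref{PFcyclesLDP} uses the representation $(K_n \mid \lambda_n) \stackrel{D}{=} \sum_{k=1}^{\lambda_n} Y_k$ with independent $Y_k \sim \Ber(1/k)$, the Gamma-function evaluation of $\prod_{k=1}^{\lambda_n} \frac{e^t + k - 1}{k}$, and the moment bound from Lemma \ref{PFcyclicmomentconvergence}; all of these transfer, so $\lim_n \frac{1}{\log n} \log \E[e^{tK_n}] = \frac{1}{2}(e^t - 1)$ again, and the rate function is the same Legendre transform. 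Part (5) follows from the conditioning argument in the proof of Theorem \ref{LongCyclesAsymptoticMean}, which needs only Proposition \ref{Shepp-LloydLimit} applied to the uniform permutation on the $\lambda_n$ cyclic points together with $\E(\lambda_n/\sqrt{n}) \to \sqrt{\pi/2}$, the latter being the $p = 1$ instance of the moment convergence above.

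The main obstacle is mild and isolated: I need to verify that replacing $n+1$ by $n-1$ and $n!$ by $(n-2)!$ in the Stirling computation genuinely leaves the leading-order asymptotics of the cyclic-point PMF intact, so that the exponent still telescopes to $1 - x\sqrt{n} - \frac{1}{2}\log n$; and, on the combinatorial side, I need to confirm that the inner summand of Theorem \ref{prime-cyclic} factorizes with a cycle-type-independent tree-attachment weight, which is exactly what makes the induced permutation on the cyclic points uniform on $S_{\lambda_n}$. Once these two checks are in place, no new ideas are required and the statement follows.
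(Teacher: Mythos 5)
Your proposal is correct and follows essentially the same route as the paper: Section \ref{sec:prime} derives the prime PMF from Theorem \ref{prime-cyclic}, observes that it has the same $\Rayleigh(1)$ asymptotics as the classical case, and then transfers parts (2)--(5) verbatim via the conditioning on $\lambda_n(\pi_n)$. You are in fact more explicit than the paper on the two points it glosses over --- the Stirling check with $n+1$ replaced by $n-1$, and the cycle-type-independent factorization that makes the induced permutation on the cyclic points uniform --- and both checks go through as you describe.
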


\section{Final remarks}

In this paper, we investigated the large $n$ asymptotic behavior of the cycle structure of random parking functions. In particular, we established the classical trio of limit theorems (law of large numbers, central limit theorem, large deviation principle) for the number of cycles $K_n(\pi_n)$ of a uniformly random parking function $\pi_n \in \PF_n$. This further implies that the equivalence of ensembles between parking functions and mappings holds for various statistics involving cycles. As stated in the introduction, understanding the cycle structure of random parking functions is a challenging research topic, as random parking functions do not satisfy nice exchangeability properties like random permutations and random mappings do.
Our study makes advances in this intriguing research direction, yet it leaves open several possible avenues for future research. 

\begin{itemize}
\item We have established a central limit theorem for the number of cycles of a random parking function. The next step is to refine this result by proving a functional central limit theorem. This was done for random mappings by Hansen \cite{Han89}.

\item We have left open the investigation of the asymptotic behavior of the tree components and the connected components of random parking functions. Much is known about the corresponding statistics on random mappings, for example, the asymptotic distributions of the number of trees of all sizes $1 \leq m \leq n$ and the size of the largest tree \cite{Ste69}, the asymptotic distribution of the number of cycle trees of size $r$ for all valid $r$ \cite{FO90}, and the expected value of the size of the largest tree and the size of the largest connected component \cite{FO90}. Although we expect the equivalence of ensembles to also hold for these characteristics, we believe that rigorously showing this will be quite difficult.

\item By conditioning on the number of cyclic points as in our proof techniques, we see that the process of normalized longest cycle lengths of a random parking function converges conditionally to the Poisson-Dirichlet distribution with parameter $1$, 
\begin{align}
    \left( \frac{L_1(\pi_n)}{\lambda_n(\pi_n)}, \frac{L_2(\pi_n)}{\lambda_n(\pi_n)}, \ldots \middle\vert \lambda_n(\pi_n) \right) \xrightarrow{D} \texttt{Poisson-Dirichlet}(1). 
\end{align}
Let $\tilde{L}_r(\pi_n)$ be the size of the $r$th largest component. We expect that the process of normalized largest component sizes, $\left( \frac{\tilde{L}_1(\pi_n)}{n}, \frac{\tilde{L}_2(\pi_n)}{n}, \ldots \right)$, of a random parking function also converges to the Poisson-Dirichlet distribution, but with parameter $\frac{1}{2}$, as shown by Aldous \cite{Ald85} in the random mappings case. Nevertheless, rigorously showing this will require an in-depth understanding of the component structure as pointed out in the last potential research direction.
\end{itemize}

\section*{Acknowledgements}

We thank David Aldous, Steven Finch, Svante Janson, Martin Rubey, and Richard Stanley for helpful discussions. The authors are grateful to many valuable comments from the referees, which significantly improved the quality of the paper. Mei Yin appreciated the opportunity to talk about this work at the 2025 International Conference on Probabilistic, Combinatorial and Asymptotic Methods for the Analysis of Algorithms.

\Address


\begin{thebibliography}{99}

\bibitem{Ald85}
D.~Aldous,
\textit{Exchangeability and related topics},
Lecture Notes in Mathematics 1117, Springer Verlag, New York, (1985).

\bibitem{AP94}
D.~Aldous and J.~Pitman,
\textit{Brownian bridge asymptotics for random mappings},
Random Structures and Algorithms, 5 (1994), 487-512.

\bibitem{AP02}
D.~Aldous and J.~Pitman,
\textit{The asymptotic distribution of the diameter of a random mapping},
Comptes Rendus Mathematique, 334 (11) (2002), 1021-1024.

\bibitem{AT92}
R.~Arratia and S.~Tavar\'{e},
\textit{The cycle structure of random permutations},
Annals of Probability, 20 (3) (1992), 1567-1591.

\bibitem{ABT}
R.~Arratia, A.~D.~Barbour, and S.~Tavar\'{e},
\textit{Logarithmic Combinatorial Structures: A Probabilistic Approach},
EMS Monographs in Mathematics, European Mathematical Society, (2003).

\bibitem{Bel23}
E.~Bellin,
\textit{Asymptotic behaviour of the first positions of uniform parking functions},
Journal of Applied Probability, 60 (2023), 1201-1218.


\bibitem{CM01}
P.~Chassaing and J.~F.~Marckert,
\textit{Parking functions, empirical processes, and the width of rooted labeled trees},
Electronic Journal of Combinatorics, 8 (2001), \#R14.

\bibitem{DP85}
J.~M.~DeLaurentis and B.~Pittel,
\textit{Random permutations and Brownian motion},
Pacific Journal of Mathematics, 119 (2) (1985), 287-301.

\bibitem{DZ}
A.~Dembo and O.~Zeitouni,
\textit{Large Deviations Techniques and Applications},
Springer Berlin, Heidelberg, (2010).

\bibitem{DH17}
P.~Diaconis and A.~Hicks,
\textit{Probabilizing parking functions},
Advances in Applied Mathematics, 89 (2017), 125-155.

\bibitem{Dic30}
K.~Dickman,
\textit{On the frequency of numbers containing prime factors of a certain relative magnitude},
Arkiv for Matematik, Astronomi och Fysik, 22 (10) (1930), 1-14.

\bibitem{DHHRY23}
I.~Durmi\'{c}, A.~Han, P.~E.~Harris, R.~Ribiero, and M.~Yin,
\textit{Probabilistic parking functions},
Electronic Journal of Combinatorics, 30 (3) (2023), \#P3.18.

\bibitem{FH98}
S.~Feng and F.~M.~Hoppe,
\textit{Large deviation principles for some random combinatorial structures in population genetics and Brownian motion},
Annals of Applied Probability, 8 (4) (1998), 975-994.

\bibitem{Fin22}
S.~Finch,
\textit{Components and cycles of random mappings},
arXiv:2205.05579 (2022).

\bibitem{FO90}
P.~Flajolet and A.~Odlyzko,
\textit{Random mapping statistics},
In: Advances in Cryptology - EUROCRYPT '89, Lecture Notes in Computer Science, 434 (1990), 329-354.

\bibitem{FPV98}
P.~Flajolet, P.~Poblete, and A.~Viola,
\textit{On the analysis of linear probing hashing},
Algorithmica, 22 (1998), 490-515.

\bibitem{FR74}
D.~Foata and J.~Riordan,
\textit{Mappings of acyclic and parking functions},
Aequationes Math, 10 (1) (1974), 10-22.

\bibitem{Gol64}
S.~W.~Golomb,
\textit{Random permutations},
Bulletin of the American Mathematical Society, 70 (1964), 747.

\bibitem{Gon44}
V.~Goncharov,
\textit{Some facts from combinatorics},
Izvestia Akad. Naukl. SSSR, Ser. Mat., 8 (1944), 3-48.

\bibitem{Han89}
J.~Hansen,
\textit{A functional central limit theorem for random mappings},
Annals of Probability, 17 (1) (1989), 317-332.

\bibitem{Har60}
B.~Harris,
\textit{Probability distributions related to random mappings},
Annals of Mathematical Statistics, 31 (4) (1960), 1045-1062.

\bibitem{Har25}
P.~E.~Harris, T.~Holleben, J.~C.~Mart\'{i}nez Mori, A.~Priestley, K.~Sullivan, and P.~Wagenius,
\textit{A probabilistic parking process and labeled IDLA},
arXiv:2501.11718 (2025).

\bibitem{Hwa96}
H.~K.~Hwang,
\textit{Large deviations for combinatorial distributions. I: Central limit theorems},
Annals of Applied Probability, 6 (1) (1996), 297-319.

\bibitem{Jan01}
S.~Janson,
\textit{Asymptotic distribution for the cost of linear probing hashing},
Random Structures \& Algorithms, 19 (3-4) (2001), 438-471.

\bibitem{Kal99}
L.~Kalikow,
\textit{Enumeration of parking functions, allowable permutation pairs, and labeled trees},
PhD Thesis, Brandeis University, (1999).

\bibitem{KY21}
R.~Kenyon and M.~Yin,
\textit{Parking functions: From combinatorics to probability},
Methodology and Computing in Applied Probability, 25 (32) (2023).

\bibitem{KY18}
W.~King and C.~H.~Yan,
\textit{Parking functions on oriented trees},
S\'{e}minaire Lotharingien de Combinatoire, 80B (47) (2018).

\bibitem{KT76}
D.~E.~Knuth and L.~Trabb Pardo,
\textit{Analysis of a simple factorization algorithm},
Theoretical Computer Science, 3 (1976), 321-348;
also in \textit{Selected Papers on Analysis of
Algorithms}, CSLI, (2000), 303-339.

\bibitem{Kol76}
V.~F.~Kolchin,
\textit{A problem of the allocation of particles in cells and random mappings},
Theory of Probability and its Applications, 21 (1) (1976), 48-63.

\bibitem{Kol86}
V.~F.~Kolchin,
\textit{Random Mappings},
Optimization Software Incorporation, (1986), 35, 46-48, 85-88, 93-94, 153.

\bibitem{KW66}
A.~G.~Konheim and B.~Weiss,
\textit{An occupancy discipline and applications},
SIAM Journal on Applied Mathematics, 14 (1966), 1266-1274.

\bibitem{Kin77}
J.~F.~C.~Kingman,
\textit{The population structure associated with the Ewens sampling formula},
Theoretical Population Biology, 11 (1977), 274-283.

\bibitem{LP16}
M.-L.~Lackner and A.~Panholzer,
\textit{Parking functions for mappings},
Journal of Combinatorial Theory, Series A, 142 (2016), 1-28.

\bibitem{Mac95}
I.~G.~Macdonald,
\textit{Symmetric functions and Hall polynomials},
Oxford Mathematical Monographs, The Clarendon Press, Oxford University Press, Second Edition, (1995).

\bibitem{MF24}
L.~Mutafchiev and S.~Finch,
\textit{On the deepest cycle of a random mapping},
Journal of Combinatorial Theory, Series A, 206 (2024), 105875.

\bibitem{Pag23}
J.~E.~Paguyo,
\textit{Cycle structure of random parking functions},
Advances in Applied Mathematics, 144 (2023), 102458.

\bibitem{Pit01}
J.~Pitman,
\textit{Random mappings, forests, and subsets associated with Abel-Cayley-Hurwitz multinomial expansions},
S\'{e}minaire Lotharingien de Combinatoire, 46 (2001), Article B46h.

\bibitem{Pit06}
J.~Pitman,
\textit{Combinatorial Stochastic Processes},
Ecole d'Et\'{e} de Probabilit\'{e}s de Saint-Flour XXXII. Lecture Notes in Mathematics, 1875 (2006), Springer, Berlin.

\bibitem{PW68}
P.~W.~Purdom and J.~H.~Williams,
\textit{Cycle length in a random function},
Transactions of the American Mathematical Society, 133 (2) (1968), 547-551.

\bibitem{Pollak2}
J.~Riordan,
\textit{Ballots and trees},
Journal of Combinatorial Theory, 6 (1969), 408-411.

\bibitem{Riordan}
J.~Riordan,
\textit{An Introduction to Combinatorial Analysis},
Princeton University Press, Princeton, (1978).

\bibitem{RY25}
M.~Rubey and M.~Yin,
\textit{Fixed points and cycles of parking functions},
Enumerative Combinatorics and Applications, 5:2 (2025), Article S2R10.

\bibitem{SL66}
L.~A.~Shepp and S.~P.~Lloyd,
\textit{Ordered cycle lengths in a random permutation},
Transactions of the American Mathematical Society, 121 (1966), 340-357.

\bibitem{SY23}
R.~P.~Stanley and M.~Yin,
\textit{Some enumerative properties of parking functions},
arXiv:2306.08681 (2023).

\bibitem{Ste69}
V.~E.~Stepanov,
\textit{Limit distributions of certain characteristics of random mappings},
Theory of Probability and its Applications, 14 (4) (1969), 612-626.

\bibitem{Yan15}
C.~H.~Yan,
\textit{Parking functions},
In: Handbook of Enumerative Combinatorics, Discrete Math. Appl. CRC Press, Boca Raton, (2015), 835-893. 

\bibitem{Yin23I}
M.~Yin,
\textit{Parking functions: Interdisciplinary connections},
Advances in Applied Probability, 55 (2023), 768-792.

\bibitem{Yin23II}
M.~Yin,
\textit{Parking functions, multi-shuffle, and asymptotic phenomena},
La Matematica, 2 (2023), 258-282.
  
\end{thebibliography}
\end{document}